\numberwithin{equation}{section}
\newtheorem{thm}{Theorem}[section]
\newtheorem{con}[thm]{Conjecture}
\newtheorem{lem}[thm]{Lemma}
\newtheorem{prop}[thm]{Proposition}
\newtheorem{rmk}[thm]{Remark}
\newtheorem*{defn}{Definition}
\newtheorem{example}[thm]{Example}
\theoremstyle{definition}
\newtheoremstyle{named}{}{}{\itshape}{}{\bfseries}{.}{.5em}{\thmnote{#3 }#1}
\theoremstyle{named}
\newtheorem*{namedthm}{Theorem}
\tikzset{node distance=5cm, auto}
\date{}
\begin{document}
\title{A finiteness theorem for  special unitary
groups of quaternionic skew-hermitian forms with good reduction}
\author{Srimathy Srinivasan} 
\address{School of Mathematics, Institute for Advanced Study, Princeton NJ, USA - 08540}
\email{srimathy@ias.edu}
\thanks{This material is based upon work supported by the National Science Foundation under Grant No. DMS - 1638352.}

\date{}

\begin{abstract}
Given a field $K$ equipped with a set of discrete valuations $V$, we develop a general theory to relate reduction properties of  skew-hermitian forms over a quaternion $K$-algebra $Q$  to   quadratic forms over the function field $K(Q)$ obtained via Morita equivalence.  Using this we show that if  $(K,V)$ satisfies certain conditions, then the number of $K$-isomorphism classes of the universal coverings of the special unitary groups of quaternionic skew-hermitian forms that  have good reduction at all valuations in $V$ is finite and bounded by a value that depends on  size of a quotient of the Picard group of $V$ and the size of the  kernel and cokernel of  residue maps in Galois cohomology of $K$ with finite coefficients.   As a corollary we prove  a conjecture of Chernousov, Rapinchuk, Rapinchuk for groups of this type.
\end{abstract}
\maketitle
\section{Introduction}\label{sec:intro}
The concept of good reduction of elliptic curves is well studied in the literature and can be characterized by unramified points of finite order. It is also well understood in the more general setting of abelian varieties (\cite{serre}, \cite{faltings}). In the case of linear algebraic groups, the study of good reduction basically started with Harder (\cite{harder}) with focus on number fields. This was followed by more progress in this direction due to many authors. It was not until very recently, that this topic was approached in a more general setting by Chernousov, Rapinchuk, Rapinchuk \cite{rapinchuk_spinor} where they consider arbitrary finitely generated fields and provide results for two dimensional global fields as well as for  some fields that are not finitely generated.\\
\indent Given a discrete valuation $v$ on a field $K$,  let $K_v$, $\mathcal{O}_v$,  and $K^{(v)}$ denote respectively the the completion of $K$, its valuation ring  and the residue field. Assume that all the fields under consideration have characteristic $\neq 2$.   Let $G$ be an absolutely almost simple linear algebraic group defined over $K$.  Then $G$   is said to have \emph{good reduction at $v$}  if there exists a reductive group scheme  $\mathcal{G}$ over $\mathcal{O}_v$ (see Definition 2.7, Exp. XIX, \cite{sga3} for the definition of reductive group schemes) with generic fiber $\mathcal{G} \otimes_{\mathcal{O}_v} K_v$  isomorphic to $G \otimes_K K_v$.  In a recent paper, Chernousov, Rapinchuk, Rapinchuk (\cite{rapinchuk_spinor}) asked the following question:\\
\indent \emph{``Given an absolutely almost simple simply connected algebraic $K$-group $G$, can one equip $K$ with a set of discrete valuations $V$ such that the set of $K$-isomorphism classes of $K$-forms of $G$ having good reduction at all $v \in V$ is finite?"}\\
\indent An affirmative answer to the question has important implications such as  properness of local-global map in Galois cohomology, finiteness of genus of an algebraic group (see \cite{crr_genus}, \cite{rapinchuk_spinor} for the definition of genus and its relation to good reduction) and in eigenvalue rigidity problems. A detailed explanation  of why this question is  important and interesting  can be  found in   \cite{crr_finite} and \cite{rapinchuk_spinor}. \\
\indent  It is well known that the answer to the question is affirmative  over   number fields  where $V$ can be chosen to be the set containing almost all non-archimedean places(\cite{gross_z}, \cite{conrad_z}, \cite{javan_z}). In the general setting of arbitrary fields, when studying good reduction of classical algebraic groups one is inevitably led to analyzing the underlying sesquilinear form that defines the  group. In particular,  reduction properties of the special unitary groups $SU(h)$ where  $h$ is a non-degenerate hermitian/skew-hermitian form over a division algebra with involution (Types A, C, D) and the spinor groups $Spin(q)$ of non-degenerate quadratic forms over $K$ (Types B, D) is  related to reduction properties of the underlying  forms $h$ and $q$ respectively. Thus the above question on  finiteness  of number of  isomorphism classes of $K$-forms of $G$  with good reduction at a set of valuations is reduced to  asking if  the number of similarity classes of forms associated to the type of $G$ that have good reduction at these valuations is finite.  In the case of spinor groups $Spin(q)$, since the underlying form is quadratic,  one can use Milnor isomorphism to map its Witt class to Galois cohomology class and take advantage of powerful cohomological tools to prove such finiteness theorems. This is the methodology adopted in \cite{rapinchuk_spinor} to prove finiteness results  for the class of  spinor groups $Spin(q)$  defined  over  $K = k(C)$,  function field of a smooth geometrically integral curve $C$ over a field $k$ of characteristic $\neq 2$ that satisfies condition ($\text{F}_2'$) (See \S\ref{sec:fm} for the definition, properties and examples of fields of type ($\text{F}_m'$), examples include local fields and higher dimensional local fields)  and $V$ is the set of valuations corresponding to closed points of $C$. In the case of special unitary groups $SU(h)$ where  underlying form $h$ is hermitian over a quadratic extension of $K$ or hermitian over a quaternion division algebra with center $K$, one applies Jacobson's theorem (\cite{jacobson}) to associate  $h$ to  a quadratic form $q_h$ of higher rank over $K$ so that   reduction properties  can be be studied via  ramification of  $q_h$ using  cohomological methods  as before.  This yields required finiteness results for the  special unitary groups $SU(h)$ for $h$ as above (see \S8 in \cite{rapinchuk_spinor}). Based on the above evidence  Chernousov, Rapinchuk, Rapinchuk  conjectured that with mild assumptions such finiteness results must hold for any  absolutely almost simple simply connected algebraic $K$-group. We restate the conjecture below:
 
\begin{con} \label{con:conjecture}(Conjecture 7.3  in \cite{rapinchuk_spinor}) : Let $K= k(C)$ be the function field of a smooth affine geometrically integral curve over a field $k$ and let $V$ be the set of discrete valuations associated with the closed points  of  $C$. Furthermore, let $G$ be an absolutely almost simple simply connected algebraic $K$-group and let $m$ be the order of the automorphism group of its root system. Assume that $char~k$ is prime to $m$ and that $k$ satisfies ($F_m'$). Then the set of $K$-isomorphism classes of $K$-forms of $G$ that have good reduction at all $v \in V$ is finite. 
\end{con}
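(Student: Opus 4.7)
The plan is to attack Conjecture~\ref{con:conjecture} type-by-type via the Killing--Cartan classification of absolutely almost simple simply connected $K$-groups. For each type, the $K$-forms of $G$ are classified by the Galois cohomology pointed set $H^1(K,\operatorname{Aut}(\overline G))$, where $\overline G$ is the quasi-split $K$-form of $G$, and good reduction at $v\in V$ translates into the corresponding class being unramified at $v$. The subset of unramified classes fits into a standard residue exact sequence involving the Picard group of $V$ and the Galois cohomology of the residue fields $K^{(v)}$ with finite coefficients. The hypothesis that $k$ satisfies $(F_m')$ with $\operatorname{char} k$ prime to $m$ is exactly what forces these residue cohomology groups with $\mu_m$-coefficients to be finite, so for each type the problem reduces to showing that finitely many residual cohomological data determine only finitely many $K$-isomorphism classes.

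For the classical types the plan is to combine existing results with the main theorem of this paper. Inner type $A_\ell$ reduces through $1\to\mu_n\to SL_n\to PGL_n\to 1$ to finiteness of Brauer classes of bounded degree unramified on $V$, which follows from the residue sequence together with $(F_2')$. Outer type $A_\ell$, type $C_\ell$, and the inner part of type $D_\ell$ are covered by the finiteness theorems for spinor groups and for $SU(h)$ for hermitian forms over quadratic extensions and over quaternion algebras carried out in \cite{rapinchuk_spinor}, via Jacobson's theorem. The remaining outer case of type $D_\ell$ --- namely $SU(h)$ for a skew-hermitian form over a quaternion division algebra --- is precisely the case handled by this paper: the Morita-equivalence machinery developed earlier transports $h$ over $Q$ to a quadratic form $q_h$ over the function field $K(Q)$, after which the cohomological arguments of \cite{rapinchuk_spinor} apply to $q_h$ and one descends back to $K$ through the kernel/cokernel of the scalar extension map, whose sizes are controlled by the Picard quotient and residue-map terms highlighted in the abstract.

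The exceptional types $G_2$, $F_4$, $E_6$, $E_7$, $E_8$ form the hardest part of the plan. For $G_2$ and $F_4$ one reduces to octonion and Albert algebras respectively and uses their classification by low-degree Galois-cohomological invariants, for which $(F_m')$ with $m\in\{2,6\}$ is exactly the right hypothesis. For $E_6$ and $E_7$ one can use the Rost and Serre--Merkurjev invariants combined with rigidity statements bounding the kernel of the invariant map on unramified classes. The decisive obstacle is $E_8$, where no complete set of cohomological invariants is known: here the only plausible route is to extract enough information from the $H^3$-Rost invariant, together with constraints on Tits indices at each residue field and the $(F_m')$ hypothesis, to force unramified classes into a finite packet of residual data and then establish a rigidity statement promoting finitely many residual invariants to finitely many global $K$-isomorphism classes. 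Making this promotion precise, absent an explicit detecting invariant, is where the real difficulty of the conjecture lies and is the step I expect to be the main obstacle.
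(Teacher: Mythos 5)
Your proposal is a program, not a proof, and the decisive gap is one you name yourself: the exceptional types. For $E_8$ you have no argument at all --- you observe that no complete set of cohomological invariants is known and that the Rost invariant plus Tits-index constraints would have to be ``promoted'' to a global finiteness statement, but that promotion is exactly the open problem, and nothing in the $(F_m')$ hypothesis or the residue machinery supplies it. The same objection applies, less severely, to your treatment of $E_6$ and $E_7$, where the ``rigidity statements bounding the kernel of the invariant map on unramified classes'' are asserted rather than established. Since the statement you are proving quantifies over \emph{all} absolutely almost simple simply connected $K$-groups, these unproved cases are fatal. Be aware that the paper itself does not claim to prove Conjecture~\ref{con:conjecture} in full: it is stated as a conjecture precisely because it remains open, and the paper's actual contribution is to settle the one missing \emph{classical} case --- the universal coverings of $SU_n(h)$ for skew-hermitian $h$ over a quaternion algebra --- identified in Remark~8.6 of \cite{rapinchuk_spinor}. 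So you are attempting something strictly stronger than what the paper establishes, and the part you cannot do is the part nobody can yet do.

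On the portion that does overlap with the paper, your outline is broadly right in spirit but imprecise in mechanism. The paper does not ``descend back to $K$ through the kernel/cokernel of the scalar extension map'' in one step. Instead it extends each $v\in V$ to a valuation $\tilde v$ on $L=K(Q)$ with ramification index $1$ and residue field $K^{(v)}(Q^{(v)})$ (Proposition~\ref{prop:properties}), proves that good reduction of $h$ at $v$ forces $q_{h_{K(Q)}}$ to be unramified at $\tilde v$ (Theorem~\ref{thm:cool}), bounds the $\tilde V$-unramified kernel $H^i_{nr}(L,\mu_2)_{\tilde V}$ via Berhuy's identification of $H^i_{nr}(L,\cdot)$ with $H^i(K,\cdot)/([Q]\cup H^{i-2}(K,\mu_2))$ together with the finiteness hypotheses on $(K,V)$ (Theorem~\ref{thm:finite}), and only then runs the id\`ele/Picard reduction and the Arason--Pfister descending induction through the powers of the fundamental ideal of $W(L)$. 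If you intend your classical-type reduction to be complete, you would need to reproduce that chain of steps rather than gesture at a kernel/cokernel of restriction; and you should correct the attribution of the quaternionic skew-hermitian case, which concerns groups of type $D_\ell$ attached to a degree-$2\ell$ algebra of index $2$ with orthogonal involution, not the inner/outer dichotomy as you describe it.
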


An important missing link to prove the conjecture is the case of the universal covering of  the special unitary groups $SU(h)$ of skew-hermitian form $h$ over quaternion $Q$ over $K$. See Remark 8.6 in \cite{rapinchuk_spinor}.  In this paper, we provide answer to this missing link:
\begin{namedthm}[Main]\label{thm:main1}
Let $K= k(C)$ be the function field of a geometrically integral curve over a field $k$ of characteristic $\neq 2$ that satisfies ($\text{F}_2'$) and let $V$ be the  set of discrete valuations on $K$ corresponding to closed points of $C$.  Then the number of $K$-isomorphism classes of the universal coverings of special unitary groups $SU_m(h)$ of non-degenerate  $m$-dimensional skew-hermitian forms $h$  over some  quaternion $K$-algebra with the canonical  involution, having good reduction at all places in $V$ is finite.
\end{namedthm}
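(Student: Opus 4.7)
The plan is to reduce the statement to a finiteness result for quadratic forms with good reduction by means of Morita equivalence over the function field $L=K(Q)$ of the Severi--Brauer variety of the quaternion $Q$. This is the skew-hermitian analogue of the argument given in \cite{rapinchuk_spinor} for hermitian forms over a quaternion, where Jacobson's theorem provides a transfer to a quadratic form directly over $K$. In the skew-hermitian case no such transfer exists over $K$ itself, which is precisely the technical reason for developing the general reduction-theoretic machinery alluded to in the abstract.

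First I would separate the problem into two parts. The $K$-isomorphism class of the universal covering of $SU_m(h)$ determines the pair consisting of $Q$ and the similarity class of $h$. It therefore suffices to bound (a) the number of isomorphism classes of quaternion $K$-algebras $Q$ with good (equivalently, unramified) reduction at every $v\in V$, and (b) for each such $Q$, the number of similarity classes of non-degenerate $m$-dimensional skew-hermitian forms over $Q$ with good reduction at $V$. Part (a) is a finiteness statement for classes in $H^2(K,\mu_2)$ with prescribed residues, which follows from the $(F_2')$-hypothesis on $k$ applied to the Faddeev-type residue sequence for $K=k(C)$; this is essentially already contained in \cite{rapinchuk_spinor}.

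For part (b), fix such a $Q$ and set $L=K(Q)$. Over $L$ the algebra $Q$ splits, and Morita equivalence converts an $m$-dimensional skew-hermitian form $h$ over $Q$ into a $2m$-dimensional non-degenerate quadratic form $q_h$ over $L$, with the induced map on similarity classes having controlled fibers via a standard descent argument through the Severi--Brauer conic. The core new step is to prove that good reduction of $h$ at $v\in V$ forces good reduction of $q_h$ at a canonically attached set $V_L(v)$ of extensions of $v$ to $L$, up to a defect measured by a quotient of the Picard group of the affine model determined by $V$ and by the kernel and cokernel of the residue maps in $H^\ast(K,\mu_n)$. Concretely, I would take a flat proper $\mathcal{O}_v$-model of $SB(Q)$, which exists because $Q$ has good reduction at $v$, and push a reductive $\mathcal{O}_v$-model of $SU_m(h)$ through the Morita functor to obtain a reductive $\mathcal{O}_w$-model of $SO(q_h)$ for each $w\in V_L(v)$. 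The problem then reduces to a finiteness for similarity classes of $2m$-dimensional quadratic forms over $L$ with good reduction at $V_L$, which one resolves either by an appropriate extension of the spinor-group finiteness of \cite{rapinchuk_spinor} to $L$ --- after presenting $L$, the function field of a conic bundle surface over $k$, as the function field of a curve over a field that retains an $(F_m')$-type property (for example via a suitable pencil) --- or by running the quantitative machinery developed in this paper directly on $L$.

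The main obstacle, as I see it, is the integral comparison at the heart of the preceding paragraph. Morita equivalence is only defined generically, over $L$, so there is no automatic reason a reductive $\mathcal{O}_v$-model of $SU_m(h)$ should descend to reductive $\mathcal{O}_w$-models of $SO(q_h)$ at the places of $L$ above $v$. Expressing this obstruction precisely in terms of the Picard-group and residue-map data --- so that it can be absorbed into the explicit quantitative bound announced in the abstract --- is where the delicate work lies and is the real source of the finiteness theorem rather than merely a qualitative statement.
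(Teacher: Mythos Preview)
Your high-level plan matches the paper's: reduce to finitely many quaternions $Q$ via the finiteness of $H^2(K,\mu_2)_V$, then for fixed $Q$ pass by Morita to quadratic forms $q_h$ over $L=K(Q)$, using injectivity of $W^{-1}(Q)\to W^{-1}(Q_L)$ to recover $h$ from $q_h$. The transfer of good reduction from $h$ at $v$ to $q_h$ at an extended place $\tilde v$ is indeed the first technical step; the paper does this by explicit computation (write $Q_v=(d_v,t_v)$ with $d_v,t_v\in\mathcal{O}_v^*$, extend $v$ Gauss-style to $K_v(x)$ and then to the quadratic extension $K_v(Q_v)$, and check that the explicit diagonalization of $q_{h_{K(Q)}}$ coming from a unit-valued diagonalization of $h_v$ has entries of valuation zero) rather than by the abstract push-forward of reductive models you sketch. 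Your conceptual route may be workable, but the paper's hands-on argument sidesteps any need to justify integral Morita equivalence.

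The genuine gap is your endgame. Neither of your two proposals for bounding similarity classes of $2m$-dimensional quadratic forms over $L$ with good reduction at $\tilde V$ goes through as stated. Option (a) fails because $L$ is the function field of a \emph{surface} over $k$; presenting it as the function field of a curve over an intermediate field $k'$ forces $k'$ to be a one-variable function field over $k$, and such a $k'$ has no reason to inherit $(F_2')$ --- in general it does not. Option (b), ``run the machinery directly on $L$'', just restates the problem: the hypotheses of the quantitative theorem demand finiteness of $H^i(L,\mu_2)_{\tilde V}$ and of residue cokernels for $L$, and you have no direct control over these.

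What the paper does instead, and what you are missing, is to \emph{descend} the cohomological finiteness from $L$ back to $K$. The key external input is a result of Berhuy: for the function field $L=K(Q)$ of a conic, restriction gives an isomorphism
\[
H^i(K,\mathbb{Q}/\mathbb{Z}(i-1))\big/\bigl([Q]\cup H^{i-2}(K,\mu_2)\bigr)\ \xrightarrow{\ \sim\ }\ H^i_{nr}(L,\mathbb{Q}/\mathbb{Z}(i-1)),
\]
and moreover $e_i([q_h])$ always lands in $H^i_{nr}(L,\mu_2)$. Combined with the specific choice of $\tilde v$ (one extension per $v$, with $e_{\tilde v/v}=1$ and $L^{(\tilde v)}\simeq K^{(v)}(Q^{(v)})$), this yields $|H^i_{nr}(L,\mu_2)_{\tilde V}|\le |H^i(K,\mu_2)^V|\cdot|H^i(K,\mu_4^{\otimes i-1})_V|$, i.e.\ a bound by quantities over $K$, where the $(F_2')$ hypothesis on $k$ bites directly. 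With these bounds the Arason--Pfister filtration argument of \cite{rapinchuk_spinor} runs over $L$ and terminates. So the obstruction you correctly flagged is not absorbed by working harder on $L$; it is dissolved by recognizing that the unramified cohomology of the conic is controlled by the cohomology of its base.
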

This is obtained as a consequence of a general theory that we briefly outline below without getting into  technical details. Let $h$ be  skew-hermitian form over a quaternion division algebra $Q$ with center $K$ and let $K(Q)$ denote the function field of the Severi-Brauer variety associated to $Q$. Then the form $h_{K(Q)}:= h \otimes_{K} K(Q)$ can be reduced to a quadratic form $q_{h_{K(Q)}}$ via Morita equivalence.  We give a  method to extend a given valuation $v$ on $K$ to a valuation $\tilde{v}$ on $K(Q)$ in such a way that if $h$ has good reduction at $v$ then the quadratic form $q_{h_{K(Q)}}$ is unramified at $\tilde{v}$ (see \S\ref{sec:unramified} for the notion of ramification and good reduction of forms). We then use cohomological methods to show finiteness results for certain unramified Galois  cohomology classes over $K(Q)$.  This allows us to derive an upper bound on the number of $K$-isomorphism classes of special unitary groups  of quaternionic skew-hermitian forms with good reduction at all  places in $V$ thereby proving Conjecture \ref{con:conjecture} for groups of this type.

\section{Notations} All the fields we consider here have characteristic $\neq 2$.  For a discrete valuation $v$ on  a field $F$, let $\mathcal{O}_v$ (or  $\mathcal{O}_F$) denote the valuation ring in $F$ when the underlying field $F$ (resp. the underlying valuation) is clear.  Let $F_v$ denote  the completion of $F$ with respect to $v$ and let $F^{(v)}$ denote its residue field. 
 The Brauer group of $F$ is denoted by $Br(F)$ and its $n$-torsion subgroup by ${}_{n}Br(K)$. For a  quaternion  algebra $Q$ over $F$, we denote  its  Brauer class by $[Q]$. The involution on $Q$ is the canonical (symplectic) involution.  Let $Q_v := Q\otimes_F F_v$. For a ring $R$, let $R^*$ denote its group of units. All the forms considered  in this paper are finite dimensional and non-degenerate. The Witt ring of $F$ is denoted by $W(F)$ and $\mathcal{I}(F)$ denotes its fundamental ideal. For a quadratic form $q$ over $F$, let $[q]$ denote its class in the Witt ring $W(F)$.

\section{Outline of the proof of the Main Theorem} \label{sec:main}
Let $K$ be a field equipped with a discrete valuation $v$ where $char~ K^{(v)} \neq 2$. Let $Q$ be a quaternion division algebra over $K$.   Assume that  $Q$ is unramified at  $v$ (see \S\ref{sec:unramified} for the notion of unramified quaternion).  Let $K(Q)$ denote the function field of the Severi-Brauer variety associated to $Q$. Let $h$ be a non-degenerate skew-hermitian form over $Q$ and let $q_{h_{K(Q)}}$ be the quadratic form associated to $h_{K(Q)}:= h \otimes_{K} K(Q)$ via Morita equivalence (see \S\ref{sec:morita}). We refer the reader to \S\ref{sec:unramified} for the notion of ramification and good reduction of forms.
\begin{thm}\label{thm:cool}
There exists a valuation $\tilde{v}$ on $K(Q)$ extending $v$ such that  if $h$ has good reduction at $v$, then $q_{h_{K(Q)}}$ is unramified at $\tilde{v}$.
\end{thm}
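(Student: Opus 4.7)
My approach is to build $\tilde{v}$ geometrically from a smooth integral model of the Severi-Brauer variety of $Q$ over $\mathcal{O}_v$, and then to transport an integral skew-hermitian lift of $h$ through an integral Morita equivalence over the resulting DVR $\mathcal{O}_{\tilde{v}}$ in order to exhibit $q_{h_{K(Q)}}$ as the generic fibre of an integral quadratic form.

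Since $Q$ is unramified at $v$, there is an Azumaya $\mathcal{O}_v$-algebra $\mathcal{Q}$ of degree $2$ with $\mathcal{Q} \otimes_{\mathcal{O}_v} K \cong Q$; let $\mathcal{X} = SB(\mathcal{Q})$ be the corresponding Severi-Brauer scheme. Then $\mathcal{X}$ is smooth and projective over $\mathcal{O}_v$, hence regular, with generic fibre $SB(Q)$ and geometrically integral special fibre $SB(\overline{Q})$, where $\overline{Q} := \mathcal{Q} \otimes_{\mathcal{O}_v} K^{(v)}$. Let $\eta$ be the generic point of this special fibre; the local ring $\mathcal{O}_{\mathcal{X}, \eta}$ is a DVR with fraction field $K(Q)$ and residue field $K^{(v)}(\overline{Q})$. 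Because the special fibre is reduced, a uniformizer of $v$ remains a uniformizer at $\eta$, so the associated valuation $\tilde{v}$ on $K(Q)$ is an unramified extension of $v$ with residue field $K^{(v)}(\overline{Q})$.

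Next I claim that $\mathcal{Q}_{\tilde{v}} := \mathcal{Q} \otimes_{\mathcal{O}_v} \mathcal{O}_{\tilde{v}}$ is a split Azumaya algebra. Its generic fibre $Q \otimes_K K(Q)$ is split by the defining property of the Severi-Brauer variety, and the natural map $Br(\mathcal{O}_{\tilde{v}}) \to Br(K(Q))$ is injective because $\mathcal{O}_{\tilde{v}}$ is a regular local ring (Auslander-Goldman). I then fix an isomorphism $\mathcal{Q}_{\tilde{v}} \cong M_2(\mathcal{O}_{\tilde{v}})$ that carries the canonical involution to the standard symplectic involution on $M_2$; such an identification exists because any two symplectic involutions on a matrix algebra over a local ring with $2$ a unit are conjugate.

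By good reduction of $h$ at $v$, there is a non-degenerate skew-hermitian form $(\mathcal{M}, \mathcal{H})$ over $\mathcal{Q}$ with generic fibre $(M, h)$. Base changing to $\mathcal{O}_{\tilde{v}}$ and applying the integral Morita equivalence between skew-hermitian modules over $(M_2(\mathcal{O}_{\tilde{v}}), \text{symplectic})$ and quadratic spaces over $\mathcal{O}_{\tilde{v}}$ produces a non-degenerate quadratic form $\mathfrak{q}$ over $\mathcal{O}_{\tilde{v}}$ whose generic fibre lies in the similarity class of $q_{h_{K(Q)}}$, since the integral Morita equivalence specialises to its generic counterpart from \S\ref{sec:morita}. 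Hence some representative of $q_{h_{K(Q)}}$ admits an integral model over $\mathcal{O}_{\tilde{v}}$, which proves it is unramified at $\tilde{v}$. The main technical point I expect to require care is the coherence between the integral and generic Morita equivalences; this should reduce to the observation that Morita bimodules for $M_2(\mathcal{O}_{\tilde{v}})$ that become generically equivalent differ only by a unit scaling, keeping the resulting quadratic forms in the same similarity class.
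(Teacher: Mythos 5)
Your construction of $\tilde{v}$ is sound and, unwinding the geometry, it produces exactly the valuation the paper uses: the local ring of $SB(\mathcal{Q})$ at the generic point of its special fibre is the Gauss-type valuation on the conic $d_vx^2+t_vy^2=1$ with residue field $K^{(v)}(Q^{(v)})$ and $e_{\tilde{v}/v}=1$ (the paper's Proposition \ref{prop:properties}, obtained there by hand). Your route to unramifiedness, however, is genuinely different: instead of diagonalizing $h_v$ over $A_v$ and pushing the explicit Morita matrices of Example \ref{example} through the valuation, you split the Azumaya algebra over the discrete valuation ring $\mathcal{O}_{\tilde{v}}$ (Auslander--Goldman) and invoke an integral hermitian Morita equivalence. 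This is cleaner, avoids the split/non-split case distinction, and would generalize beyond quaternions.

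The genuine gap is in the last step. The integral Morita equivalence over $(M_2(\mathcal{O}_{\tilde{v}}),\text{symplectic})$ hands you a unimodular quadratic form $\mathfrak{q}$ whose generic fibre is only \emph{similar} to $q_{h_{K(Q)}}$: hermitian Morita equivalence for the adjoint involution of an alternating form $b$ depends on the choice of $b$, and rescaling $b$ by $\lambda$ rescales the output quadratic form by $\lambda$. But unramifiedness is not a similarity invariant --- if $\tilde{v}(\lambda)$ is odd and $q'$ is unramified with anisotropic nonzero residue, then $\lambda q'$ is ramified --- so ``some form similar to $q_{h_{K(Q)}}$ is unramified'' does not yield the statement of Theorem \ref{thm:cool}, and the precise normalization matters downstream in the proof of Theorem \ref{thm:main}, where Witt classes such as $\Lambda_v^{(j_0)}([q_{H_j}]\perp -[q_{H_{j_0}}])$ must land in $W_0(L_{\tilde{v}})$ on the nose. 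Your proposed repair --- that two Morita bimodules \emph{for $M_2(\mathcal{O}_{\tilde{v}})$} which agree generically differ by a unit --- is true but does not apply: the bimodule defining $q_{h_{K(Q)}}$ (the explicit equivalence of \S\ref{sec:morita}, fixed over $K(Q)$) is not given to you as an $\mathcal{O}_{\tilde{v}}$-bimodule. To compare it with your integral one you must show that the explicit splitting and its associated alternating form are defined and unimodular over $\mathcal{O}_{\tilde{v}}$, i.e.\ that the matrices of Example \ref{example} have unit determinant data at $\tilde{v}$; that verification (via $\tilde{v}(x)=\tilde{v}(y)=0$, $d_v,t_v\in\mathcal{O}_{K_v}^*$, and the reduced-norm computation showing $\min\{v(a_l),v(b_l),v(c_l)\}=0$) is precisely the content of the paper's proof, so as written your plan defers the essential point to a step it does not actually carry out.
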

\begin{proof}
See \S\ref{sec:ext}.
\end{proof}

We  use the above result to prove finiteness statements as claimed. 
Recall the following notations and  facts from \cite{rapinchuk_spinor}. Let $V$ be a set of discrete valuations on  a field $K$ that satisfies the following conditions.
\begin{enumerate}[(A)]
\item For any  $a \in K^*$, the set $V(a): = \{v \in V ~|~ v(a) \neq 0 \}$ is finite.
\item $char ~ K^{(v)} \neq 2~\forall v \in V$.
\end{enumerate}
As noted in \cite{rapinchuk_spinor}, condition (A) is satisfied for a divisoral set of valuations $V$ on a finitely generated $K$.   Due to conditions (A) and (B),  for $l$ a power of $2$, we  have residue maps in Galois cohomology (See \S\ref{sec:unramified} for a discussion on this)
\begin{align} \label{eqn:res}
r_l^i:  H^i(K, \mu_l^{\otimes i-1}) &\rightarrow \bigoplus_{v \in V} H^{i-1}(K^{(v)}, \mu_l ^{\otimes i-2})
\end{align}
where $\mu_l$ denotes $l$-th roots of unity.
For $l=2$, this is just 
\begin{align}\label{eqn:res2}
r_2^i: H^i (K , \mu_2) &\rightarrow \bigoplus_{v \in V} H^{i-1} (K^{(v)},  \mu_2)
\end{align}
Let $H^i(K, \mu_l^{\otimes i-1})_{V}$ and $H^i(K, \mu_l^{\otimes i-1})^{V}$ denote  respectively the $Ker ~r_l^i$ and  $Coker ~r_l^i$.  Let $Pic(V)$  denote the Picard group of $V$ (see \S2 in \cite{rapinchuk_spinor} for the definition).
The main theorem is a consequence of  the following.
\begin{thm}\label{thm:main}
Let $V$ be a set of discrete valuations on $K$  satisfying conditions (A) and (B). Suppose the following holds:
\begin{enumerate}
\item the quotient $Pic(V)/2 Pic (V)$ is finite and 
\item the cohomology groups  $H^2 (K , \mu_2)_{V}$, $H^i (K , \mu_2)^{V}$ and $H^i(K, \mu_4^{\otimes i-1})_{V}$ are finite for all $i = 1,2, \cdots l : = [log_2~ 2n] +1$.
\end{enumerate}
Then the number of $K$-isomorphism classes of special unitary groups of $n$-dimensional skew-hermitian forms  $SU_n(h)$, where $h$ is  skew-hermitian over some quaternion (not necessarily division) algebra with the canonical  involution, having good reduction at all $v \in V$ is finite and bounded above by 
\begin{align}\label{eqn:bound}
|H^2 (K , \mu_2)_{V}| \cdot |Pic(V)/2 Pic(V)| \cdot  \prod_{i=1}^{l} |H^i (K , \mu_2)^{V}|\cdot |H^i(K, \mu_4^{\otimes i-1})_{V}|
\end{align} 
\end{thm}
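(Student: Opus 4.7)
The plan is to parameterize each universal cover of $SU_n(h)$ by the pair $(Q,h)$ consisting of the underlying quaternion $K$-algebra and the skew-hermitian form, and to bound each piece of data separately. First I would observe that good reduction of $SU_n(h)$ at $v$ forces the Tits algebra of the group to extend to an Azumaya algebra over $\mathcal{O}_v$; since this Tits algebra is $Q$ itself up to Brauer equivalence, the class $[Q]$ is then unramified at every $v \in V$. Consequently $[Q]$ lies in $H^2(K,\mu_2)_V$, contributing at most $|H^2(K,\mu_2)_V|$ choices for $Q$; the case of split $Q$ is absorbed here as the trivial Brauer class.

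Next, I would fix such an unramified $Q$ and range over skew-hermitian $h$ of rank $n$ over $Q$. By Theorem \ref{thm:cool}, each $v \in V$ extends to a valuation $\tilde{v}$ on $K(Q)$ such that the Morita-equivalent quadratic form $q_{h_{K(Q)}}$, of dimension $2n$, is unramified at $\tilde{v}$. By Voevodsky's resolution of Milnor's conjecture together with Orlov--Vishik--Voevodsky, the Witt class of $q_{h_{K(Q)}}$ is determined by its dimension and its Arason-type invariants $e_i(q_{h_{K(Q)}}) \in H^i(K(Q),\mu_2)$ for $i = 1,\dots,l := [\log_2 2n]+1$; beyond this range the Arason--Pfister Hauptsatz forces triviality. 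Unramifiedness at every $\tilde{v}$ constrains each $e_i$ to the corresponding unramified subgroup of $H^i(K(Q),\mu_2)$.

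The central step is then to translate these unramified invariants over $K(Q)$ into invariants over $K$ and to count the resulting tuples. The first-order (signed discriminant) data is controlled by unramified square classes in $K^*$, which are governed by the sequence involving $Pic(V)/2Pic(V)$, producing that factor. For higher invariants I would combine the Amitsur-type exact sequence
\[
H^{i-2}(K,\mu_2) \xrightarrow{\cup [Q]} H^i(K,\mu_2) \to H^i(K(Q),\mu_2)
\]
with a Bockstein-type connecting map relating $\mu_2$- and $\mu_4$-coefficients. A diagram chase against the residue maps \eqref{eqn:res} and their analogues over $K(Q)$ should show that each unramified class over $K(Q)$ lifts to $K$ modulo a cokernel contribution captured by $H^i(K,\mu_2)^V$, while the obstruction to lifting lies in the unramified subgroup $H^i(K,\mu_4^{\otimes i-1})_V$. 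Taking the product over $i = 1,\dots,l$ then yields the stated bound.

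The hardest part will be this last step: organizing the $\mu_2$ and $\mu_4$ exact sequences compatibly with residues at $v$ and at the extensions $\tilde{v}$ supplied by Theorem \ref{thm:cool}, and verifying that $\mu_4$-coefficients enter only on the kernel side of the residue map while $\mu_2$-coefficients enter on the cokernel side. This asymmetry should ultimately reflect that the Severi--Brauer variety of $Q$ has index $2$, so passage through $K(Q)$ introduces a Bockstein doubling the coefficient modulus exactly once.
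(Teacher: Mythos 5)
Your overall strategy matches the paper's: bound the quaternion algebra by $|H^2(K,\mu_2)_V|$, pass to $L=K(Q)$ via Morita equivalence and the extended valuations $\tilde v$ of Theorem \ref{thm:cool}, control unramified classes over $L$ by Berhuy's computation $H^i_{nr}(L,\mathbb{Q}/\mathbb{Z}(i-1))\simeq H^i(K,\mathbb{Q}/\mathbb{Z}(i-1))/([Q]\cup H^{i-2}(K,\mu_2))$ (whence the $\mu_4$-coefficients on the kernel side and the cokernel $H^i(K,\mu_2)^V$, exactly as your ``Bockstein doubling'' heuristic suggests), and finish with Arason--Pfister. However, there is a genuine gap at the very start of your second step. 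Good reduction at $v$ of the \emph{universal cover of the group} $SU_n(h)$ does not imply good reduction at $v$ of the \emph{form} $h$: since similar forms give isomorphic groups, it only implies that $\lambda_v h$ has good reduction at $v$ for some $\lambda_v\in K_v^*$ depending on $v$ (Remark \ref{rmk:equiv}). Consequently you cannot invoke Theorem \ref{thm:cool} to conclude that $q_{h_{K(Q)}}$ itself is unramified at every $\tilde v$; only a locally rescaled form is. This is precisely where the factor $|Pic(V)/2Pic(V)|$ enters in the paper: the tuple $(\lambda_v)_{v}$ is an id\`ele, and the isomorphism $Pic(V)/2Pic(V)\simeq \mathbb{I}(K,V)/\mathbb{I}(K,V)^2\mathbb{I}_0(K,V)K^*$ is used to pigeonhole the forms into classes on which the multipliers can be normalized by a single global scalar times units and squares, after which the residue computations go through. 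Your attribution of the Picard factor to ``first-order discriminant data'' misplaces its role, and without the id\`ele normalization the unramifiedness claims on which the rest of your count depends are false in general.

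A secondary, repairable issue: the invariants $e_i(q)$ for $i\geq 2$ are not simultaneously defined on an arbitrary form $q$ (only on $\mathcal{I}(L)^i$, with kernel $\mathcal{I}(L)^{i+1}$), so one cannot literally count tuples $(e_1,\dots,e_l)$. The standard fix --- and what the paper does --- is to compare forms pairwise and pigeonhole successively: pass to nested subfamilies $J_0\supseteq J_1\supseteq\cdots\supseteq J_l$ so that at stage $m$ the differences $[q_{H_j}]-[q_{H_{j_m}}]$ lie in $\mathcal{I}(L)^{m+1}$ and their $e_{m+1}$-invariants land in the finite group $H^{m+1}_{nr}(L,\mu_2)_{\tilde V}$ bounded by Theorem \ref{thm:finite}. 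This produces the same product bound you aim for, so it is a matter of execution rather than of strategy, but as written your ``determined by its invariants'' step is not a valid counting argument.
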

\begin{proof}
See \S\ref{sec:proofmain}.
\end{proof}

A situation where the hypothesis of Theorem \ref{thm:main} holds is the following. 
\begin{prop} \label{prop:hypothesis}
Let $K= k(C)$ be the function field of a geometrically integral curve $C$ over a field $k$ of characteristic $\neq 2$ that satisfies ($\text{F}_2'$) (see  \S\ref{sec:fm})  and let $V$ be the  set of discrete valuations on $K$ corresponding to closed points of $C$.  Then $(K, V)$ satisfies the hypothesis of Theorem \ref{thm:main}.
\end{prop}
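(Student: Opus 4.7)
The plan is to verify the two hypotheses of Theorem \ref{thm:main} separately, exploiting the arithmetic geometry of smooth curves over fields of type $(F_2')$.

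\textbf{Finiteness of $Pic(V)/2Pic(V)$.} Compactify $C$ to a smooth projective curve $\widetilde C$ with complement a finite set $S$ of closed points. Since $V$ is in bijection with the closed points of $C$, we have $Pic(V)\cong Pic(C)$, and the excision sequence
$$\bigoplus_{s\in S}\mathbb{Z}\;\longrightarrow\;Pic(\widetilde C)\;\longrightarrow\;Pic(C)\;\longrightarrow\;0$$
reduces matters to finiteness of $Pic(\widetilde C)/2$. Combining the Kummer sequence on $\widetilde C_{k_s}$ with the Hochschild--Serre spectral sequence for $Gal(k_s/k)$, $Pic(\widetilde C)/2$ is controlled by $J[2](k_s)^{Gal}$, $H^1(k,J[2])$ and ${}_2Br(k)$, where $J$ is the Jacobian of $\widetilde C$. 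Property $(F_2')$ makes each of these finite-coefficient Galois cohomology groups in degrees $\leq 2$ finite.

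\textbf{Finiteness of the Galois cohomology kernels and cokernels.} The central tool is the Bloch--Ogus/Faddeev exact sequence for a smooth curve $C/k$ with coefficients $\mu_l^{\otimes i-1}$, whose boundary coincides with the residue map $r_l^i$ of (\ref{eqn:res}) because $V$ is the full set of closed points of $C$. In this sequence, the kernel $H^i(K,\mu_l^{\otimes i-1})_V$ is a subquotient of $H^i(k,\mu_l^{\otimes i-1})$ and the cokernel $H^i(K,\mu_l^{\otimes i-1})^V$ is a subquotient of $H^{i-1}(k,\mu_l^{\otimes i-2})$. For $i\leq 3$, property $(F_2')$ renders both of these finite. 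For $i\geq 4$, we use that $(F_2')$ forces $\mathrm{cd}_2(k)\leq 2$, hence $\mathrm{cd}_2(K)\leq 3$ (by the standard transcendence-degree bound), so $H^i(K,\mu_l^{\otimes i-1})$ already vanishes and the kernel and cokernel are trivially finite. This disposes of every $(i,l)$ with $i\leq [\log_2 2n]+1$ and $l\in\{2,4\}$ required by the hypothesis of Theorem \ref{thm:main}.

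The principal technical obstacle is the Bloch--Ogus/Faddeev sequence itself: writing it down in the precise form needed with the twisted coefficients $\mu_4^{\otimes i-1}$ on the possibly non-proper curve $C$, and checking that its boundary agrees with the residue map $r_l^i$ defined in (\ref{eqn:res}) so that kernel and cokernel do match $H^i(K,\mu_l^{\otimes i-1})_V$ and $H^i(K,\mu_l^{\otimes i-1})^V$. Once these compatibilities are established the remainder of the proposition is a routine assembly of facts about function fields over $(F_2')$-fields already present in \cite{rapinchuk_spinor}, merely applied with $l=4$ in addition to $l=2$.
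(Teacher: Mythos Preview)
Your argument contains two genuine errors.

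First, the claim that $(F_2')$ forces $\mathrm{cd}_2(k)\leq 2$ is false. The paper explicitly lists higher local fields such as $\mathbb{Q}_p((t_1))\cdots((t_n))$ among the examples of $(F_2')$-fields, and these have $\mathrm{cd}_2 = n+2$. So for such $k$ one has $\mathrm{cd}_2(K)=n+3$, and $H^i(K,\mu_l^{\otimes i-1})$ is nonzero for $i$ up to $n+3$. Since $l=[\log_2 2n]+1$ can be made arbitrarily large by choosing the dimension $n$ of the skew-hermitian form, your vanishing argument for $i\geq 4$ collapses and you have no finiteness statement in the range that actually matters.

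Second, your description of the Bloch--Ogus/Faddeev sequence is the one valid for a \emph{rational} curve. For a general smooth curve $C$ the localization sequence reads
\[
\cdots \to H^{i}_{\acute{e}t}(C,\mu_l^{\otimes i-1}) \to H^i(K,\mu_l^{\otimes i-1}) \xrightarrow{r^i_l} \bigoplus_{v\in V} H^{i-1}(K^{(v)},\mu_l^{\otimes i-2}) \to H^{i+1}_{\acute{e}t}(C,\mu_l^{\otimes i-1}) \to \cdots,
\]
so the kernel and cokernel of $r^i_l$ are subquotients of $H^{i}_{\acute{e}t}(C,\mu_l^{\otimes i-1})$ and $H^{i+1}_{\acute{e}t}(C,\mu_l^{\otimes i-1})$, not of $H^i(k,\mu_l^{\otimes i-1})$ and $H^{i-1}(k,\mu_l^{\otimes i-2})$. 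Via Leray, $H^{i}_{\acute{e}t}(C,\mu_l^{\otimes j})$ involves $H^{i-1}\bigl(k,H^1_{\acute{e}t}(C_{\bar k},\mu_l^{\otimes j})\bigr)$ in addition to $H^i(k,\mu_l^{\otimes j})$, and the former is not in general a subquotient of any $H^*(k,\mu_l^{\otimes *})$.

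The paper's proof avoids both pitfalls: it invokes the Bloch--Ogus sequence in the form above and then appeals to \cite{igor_finiteness} (together with Lemma~\ref{lem:prime}) for the finiteness of $H^{i}_{\acute{e}t}(C,\mu_l^{\otimes j})$ for \emph{all} $i\geq 0$. The point is that $(F_2')$ yields finiteness of $H^i(k,M)$ for every $i$ and every finite $2$-primary Galois module $M$, which is much stronger than any bound on $\mathrm{cd}_2(k)$; this is what makes the Leray spectral sequence for $C\to \mathrm{Spec}\,k$ terminate in finite groups in every degree. Your low-degree analysis for $Pic(V)/2$ is fine, but for the cohomological part you should replace the false $\mathrm{cd}_2$ argument with this uniform finiteness input.
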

\begin{proof}
Conditions (A) and (B) are easily seen to be satisfied. The finiteness of $Pic(V)/2 Pic(V)$ is shown in the proof of Theorem 1.4 in \cite{rapinchuk_spinor}. We will now show finiteness of $H^2 (K , \mu_2)_{V}$, $H^i (K , \mu_2)^{V}$ and $H^i(K, \mu_4^{\otimes i-1})_{V}$. For any $l$ coprime to $char~K^{(v)}$, the Bloch-Ogus  spectral sequence and Kato complexes yield a long exact sequence in cohomology (see   \S4 in \cite{igor_finiteness})
\begin{align*}
\cdots \rightarrow H^{i}_{\acute{e}t}(C, \mu_l^{\otimes i-1}) \rightarrow H^i(K,  \mu_l^{\otimes i-1}) \xrightarrow{r^i_l} \bigoplus_{v \in V} H^{i-1}(K^{(v)}, \mu_l^{\otimes i-2}) \rightarrow H^{i+1}_{\acute{e}t}(C, \mu_l^{\otimes i-1}) \rightarrow \cdots
\end{align*}
By Lemma \ref{lem:prime} and Corollary 3.2 in \cite{igor_finiteness},  $H^{i}_{\acute{e}t}(C, \mu_l^{\otimes j})$ is finite for all $i \geq 0$ and all $j$. This proves the claim.
\end{proof}

The Main Theorem now follows from Proposition \ref{prop:hypothesis} and Theorem \ref{thm:main}.
\\

The paper is organized as follows. In \S\ref{sec:bigprelim} we briefly recall the necessary results from the  literature that will lay foundation for the rest of the paper. In \S\ref{sec:unramified}, we define the notion of good reduction of skew-hermitian forms and relate it to good reduction of the universal covering of special unitary groups of skew-hermitian forms. In \S\ref{sec:morita} we use Morita theory to reduce skew-hermitian forms over quaternions to quadratic forms and give an explicit example of the correspondence which will be used later.   Next in \S\ref{sec:ext} we describe a method to extend valuations from a  discrete valued field to the function field of Severi-Brauer variety associated to a quaternion algebra over the field and discuss the properties of this extension.  Finally, in \S\ref{sec:proofmain}, we prove Theorem \ref{thm:main}.

\section{Preliminaries} \label{sec:bigprelim}
\subsection{Fields of type ($\text{F}_m'$)}\label{sec:fm}
The notion of fields of type ($\text{F}_m'$) is introduced in \cite{igor_finiteness}. Recall from \cite{igor_finiteness} that for  $m$ prime to $char ~k$, a field $k$ is said to be \emph{of type} ($\text{F}_m'$) if for every finite separable extension  $L/k$, the quotient $L^*/(L^*)^m$ is finite (here $L^*$ is the multiplicative group of units). This notion generalizes Serre's condition (F) (\cite{serre_galois_coho}) and is useful for many applications. It is shown that over fields of type ($\text{F}_m'$), certain Galois cohomology groups are finite (see Theorem 1.1 in  \cite{igor_finiteness}), which is useful for computations of  unramified cohomologies (Proposition 4.2 in \cite{igor_finiteness}).  Examples of such fields  include  finite fields, local fields and  higher dimensional local fields such as $\mathbb{Q}_p((t_1))\cdots ((t_n))$(note that the last two are not finitely generated). See also Example 2.9 in \cite{igor_finiteness}.  We now make the following observation (I thank P. Deligne to remark about this). 
\begin{lem} \label{lem:prime}
Let $k$ be a field and let $m$ be prime to its characteristic. Then $k$ is of type ($\text{F}_m'$) if and only if it is of type ($\text{F}_p'$) for every prime $p$ dividing $m$.
\end{lem}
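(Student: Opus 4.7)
The plan is to treat this as essentially a statement about the multiplicative group of a fixed finite separable extension $L/k$, handling the two implications separately.

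The forward direction $(\text{F}_m') \Rightarrow (\text{F}_p')$ for each prime $p \mid m$ is immediate: from $p \mid m$ one has $(L^*)^m \subseteq (L^*)^p$, producing a canonical surjection $L^*/(L^*)^m \twoheadrightarrow L^*/(L^*)^p$, and finiteness of the source transfers to the target.

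For the converse, I would fix a finite separable extension $L/k$ and assume $L^*/(L^*)^p$ is finite for every prime $p \mid m$. I would then proceed in two stages. First, for a single prime $p \mid m$, I would show by induction on $n$ that $L^*/(L^*)^{p^n}$ is finite. The short exact sequence
\[
1 \to (L^*)^{p^{n-1}}/(L^*)^{p^n} \to L^*/(L^*)^{p^n} \to L^*/(L^*)^{p^{n-1}} \to 1
\]
reduces matters to controlling the leftmost term, and the $p^{n-1}$-power map descends to a surjection $L^*/(L^*)^p \twoheadrightarrow (L^*)^{p^{n-1}}/(L^*)^{p^n}$, so the hypothesis $(\text{F}_p')$ bounds the kernel while the induction bounds the quotient.

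Second, writing $m = p_1^{a_1}\cdots p_r^{a_r}$, I would combine the different prime powers by induction on $r$. The key observation is that for coprime positive integers $a,b$, Bezout's identity yields $(L^*)^a \cap (L^*)^b = (L^*)^{ab}$, so the diagonal map provides an injection
\[
L^*/(L^*)^{ab} \hookrightarrow L^*/(L^*)^a \times L^*/(L^*)^b,
\]
which propagates finiteness from the factors to $L^*/(L^*)^m$. Setting $a = p_r^{a_r}$ and $b = p_1^{a_1}\cdots p_{r-1}^{a_{r-1}}$ closes the induction.

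There is no substantive obstacle: both stages are elementary manipulations of abelian groups. The hypothesis that $m$ be prime to $\mathrm{char}\,k$ is not actually invoked in this argument; it is present for the surrounding cohomological applications elsewhere in the paper.
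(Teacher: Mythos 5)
Your proposal is correct, and the forward direction together with the prime-power stage coincide with the paper's argument: the paper also proceeds by induction on the exponent $j$, using the exact sequence $0 \to (L^*)^{p^{j-1}}/(L^*)^{p^j} \to L^*/(L^*)^{p^j} \to L^*/(L^*)^{p^{j-1}} \to 0$ and the surjection $L^*/(L^*)^p \twoheadrightarrow (L^*)^{p^{j-1}}/(L^*)^{p^j}$ induced by $x \mapsto x^{p^{j-1}}$ (the paper additionally identifies the kernel of this surjection in terms of roots of unity, which is not needed for finiteness). Where you genuinely diverge is in combining distinct primes. The paper runs an induction on the number $r$ of prime divisors, writing $m = p^j n$ with $(n,p)=1$ and reusing the same power-map trick: $x \mapsto x^{p^j}$ gives a surjection $L^*/(L^*)^n \twoheadrightarrow (L^*)^{p^j}/(L^*)^m$, which together with the evident exact sequence yields finiteness of $L^*/(L^*)^m$. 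You instead invoke the identity $(L^*)^a \cap (L^*)^b = (L^*)^{ab}$ for coprime $a,b$ (correctly justified by Bezout: if $x = y^a = z^b$ and $ua+vb=1$ then $x = (z^u)^{ab}(y^v)^{ab}$) and deduce an injection $L^*/(L^*)^{ab} \hookrightarrow L^*/(L^*)^a \times L^*/(L^*)^b$. Both arguments are elementary and valid; yours is arguably cleaner at this step since it avoids exact sequences and roots of unity entirely, while the paper's has the virtue of using one uniform device throughout. Your closing remark is also fair: the coprimality of $m$ with $\mathrm{char}\,k$ plays no role in the group-theoretic argument and is only there because the paper's definition of type $(\text{F}_m')$ presupposes it.
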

\begin{proof}
By definition, it is clear that if $k$ is of type ($\text{F}_m'$), it is of type ($\text{F}_p'$) for every prime $p$ dividing $m$. We will prove the other direction by induction on the number of primes dividing $m$. Let $r$ be the  number of primes dividing $m$. Consider the case $r=1$. Assume that $k$ is of type ($\text{F}_p'$).  Let $L$ be a finite separable extension of $k$. For every $j \geq 1$ we have exact sequences of groups
\begin{align*}
0 \rightarrow \frac{(L^* )^{p^{j-1}}}{(L^* )^{p^j}}  \rightarrow \frac{(L^*)}{(L^*)^{p^j}} &\rightarrow \frac{(L^ *)}{(L^*)^{p^{j-1}}} \rightarrow 0\\
x &\mapsto x\\
\end{align*}
\begin{align*}
0 \rightarrow \frac{\mu_{p^{j-1}}(L^*)}{\mu_{p^{j-1}}(L^*) \cap (L^*)^p} \rightarrow \frac{(L^ *)}{(L^*)^p} &\rightarrow  \frac{(L^* )^{p^{j-1}}}{(L^* )^{p^j}} \rightarrow 0\\
x &\mapsto x^{p^{j-1}}
\end{align*}
where $\mu_n(L^*)$ denotes $n$-th roots of unity in $L^*$. By hypothesis, these sequences  imply that  $\frac{(L^* )}{(L^* )^{p^j}}$ is finite if $\frac{(L^* )}{(L^* )^{p^{j-1}}}$ is finite. Thus by induction on $j$ we conclude that  $k$ is of type ($\text{F}_{p^j}'$).  Therefore the the statement of the lemma is true for $r=1$. Assume that $m$ is arbitrary and $k$ is of type ($\text{F}_p'$) for every prime $p$ dividing $m$. Let $m= p^jn$  where $n$ is coprime to $p$ and  $j \geq 1$.   We have the  following exact sequences 
\begin{align*}
0 \rightarrow \frac{(L^*)^{p^j}}{(L^*)^m} \rightarrow \frac{(L^*)}{(L^*)^m} &\rightarrow \frac{(L^*)}{(L^*)^{p^j}} \rightarrow 0\\
x&\mapsto x \\\\
0 \rightarrow \frac{\mu_{p^j}(L^*)}{\mu_{p^j}(L^*) \cap (L^*)^n} \rightarrow \frac{(L^ *)}{(L^*)^n} &\rightarrow  \frac{(L^*)^{p^j}}{(L^* )^m} \rightarrow 0\\
x &\mapsto x^{p^j}
\end{align*}
By induction hypothesis on $r$, $\frac{(L^ *)}{(L^*)^n}$ is finite and by the case $r=1$, $\frac{(L^*)}{(L^*)^{p^j}}$ is finite. Therefore we conclude that $\frac{(L^*)}{(L^*)^m}$ is finite.  This proves that  $k$ is of type ($\text{F}_m'$). 
\end{proof}

 This settles the query raised in the statement below Conjecture 7.3 in \cite{rapinchuk_spinor}. 

\subsection{Residue maps and ramification}\label{sec:unramified}
Let $K$ be a field with discrete valuation $v$ and let $l$ be prime to $char ~ K^{(v)}$. Recall from Chapter II in \cite{garibaldi_coho_inv} that  for every integer $j$ we have residue maps in Galois cohomology
\begin{align}
r_{l,v}^{j}: H^i(K, \mu_l^{\otimes j}) \rightarrow H^{i-1}(K^{(v)}, \mu_l^{\otimes j-1}) , ~i \geq 1
\end{align}
where $\mu_l$ is the group of  $l$-th roots of unity in the separable closure of $K^{(v)}$ and  $\mu_l^{\otimes j}$ is the $j$-th Tate twist of $\mu_l$ as described in \cite{garibaldi_coho_inv} (Chapter II, \S7.8). An element of $H^i(K, \mu_l^{\otimes j})$ is said to be \emph{unramified at $v$} if  is in the kernel of the above residue map. Now assume that  $char ~K^{(v)} \neq 2$ and  $l=2$. Then we simply have 
\begin{align*}
r_{2,v}: H^i(K, \mu_2) \rightarrow H^{i-1}(K^{(v)}, \mu_2)
\end{align*}
Let $\pi$ be a uniformizer of $K_v$. Let  $W(K)$ denote  the Witt ring of $K$. Recall from \cite{lam_quadratic}, Chapter VI, \S1 that we have  residue homomorphisms of groups
\begin{align*}
\partial_{i,v} : W(K) \xrightarrow{Res_{K_v/K}} W(K_v) \rightarrow W(K^{(v)})
\end{align*}
The  residue homomorphisms can be  described as follows. Let  $q$ be a quadratic form over $K$  and let  [q] denote its class in $W(K)$. Suppose $Res_{K_v/K} ([q]) = <u_1, u_2 , \cdots u_m, \pi u_{m+1} \cdots \pi u_{n}>, u_i \in \mathcal{O}_{K_v}^*$. Then 
\begin{align*}
\partial_{1,v}([q]) &= <\overline{u}_1, \cdots , \overline{u}_m> \\
\partial_{2,v}([q]) &= <\overline{u}_{m+1}, \cdots, \overline{u}_n>
\end{align*}
Here $\overline{u_i}$ denotes the image of $u_i$ in  $K^{(v)}$. When $K= K_v$, let $W_0(K_v)$  denote the kernel of $\partial_{2,v}$.  It is the subring  of $W(K_v)$ generated by classes $<u>, u \in \mathcal{O}_{K_v}^*$.   Then we have a split exact sequence (see \S5 in \cite{milnor_quad})
\begin{align*}
0 \rightarrow W_0(K_v) \rightarrow W(K_v) \xrightarrow{\partial_{2,v}} W(K^{(v)})\rightarrow 0
\end{align*}

 Recall now that  due to Voevodsky's proof of  the Milnor conjecture (\cite{voe1}, \cite{voe2}), for any field $F$ with characteristic $\neq 2$ we have natural  isomorphisms 
\begin{align} \label{eqn:milnor}
e_ n: \mathcal{I}(F)^n/ \mathcal{I}(F)^{n+1} \rightarrow H^n(F, \mu_2)
\end{align}
where $\mathcal{I}(F)$ denotes the fundamental ideal in $W(F)$. Moreover, the isomorphisms $e_n$ commute with the respective residue homomorphisms (see Satz 4.11 in \cite{arason}), that is for $[q] \in  \mathcal{I}(K)^n$, we have  
\begin{align*}
e_{n-1}(\partial_{2,v}([q])) = r_{2,v}(e_n([q]))
\end{align*}
  \begin{defn}
  We say that $q$ is unramified at  $v$ if $[q] \in W_0(K_v)$ (There is  a slight abuse of notation here to make it look tidy, what we really mean is  $Res_{K_v/K}([q])  \in W_0(K_v)$). 
  \end{defn}
Let us denote the kernel of the map  (see \S10 in \cite{saltman_division} and \cite{serre_galois_coho} Chapter II, Appendix) 
\begin{align} \label{eqn:residue}
{}_{2}Br(K) \simeq H^2(K, \mu_2) \xrightarrow{r_v} H^1( K^{(v)} , \mu_2)
\end{align}
 by ${}_{2}Br(K)_{v}$.  For a quaternion  $Q$  over $K$, if $[Q]$ is in the kernel of the above map, we say that $Q$ is  \emph{unramified}  at $v$. Let $Q_v:= Q \otimes_K K_v$.  Then $Q_v$ is either split i.e, a matrix algebra  or is a quaternion division algebra over $K_v$.  Suppose $Q_v$ is not split.   Since $K_v$ is Henselian, one can extend the valuation $v$ on $K_v $ to a (necessarily unique) valuation on $Q_v$ (Corollary 2.2  in \cite{wadsworth_valuation_division}),  which by abuse of notation is  also denoted by $v$. The extended valuation on $Q_v$ is   given by (equation (2.7) in  \cite{wadsworth_valuation_division})
\begin{align} \label{eqn:extval}
v(a) = \frac{1}{2}v(Nrd(a)) ~ \forall a \in Q_v^*
\end{align}
where $Nrd$ denotes the reduced norm on $Q_v$.  Let 
\begin{align*}
A_v= \{ a \in Q_v^* ~| ~v(a) \geq 0 \} \cup 0
\end{align*}
be the valuation ring of $Q_v$.  Note that its group of units is given by 
\begin{align*} 
A_v^* =  \{ a \in Q_v^* ~| ~v(a) = 0 \}
\end{align*}
If $Q_v\simeq M_2(K_v)$ is split, we set $ A_v := M_2(\mathcal{O}_v)$.  By Theorem 10.3 in \cite{saltman_division} and Theorem 3.2 in \cite{wadsworth_valuation_division},  if  $Q_v$ is unramified at $v$, then  $A_v$ is an Azumaya algebra over $\mathcal{O}_{v}$ and $Q_v \simeq A_v \otimes_{\mathcal{O}_{K_v}}  K_v$. Since $[A_v: \mathcal{O}_{K_v}] = [Q_v: K_v] = 4$, $A_v$  is quaternionic and has representation given by $A_v = (d_v, t_v)$ for some $d_v, t_v \in \mathcal{O}_{K_v}^*$. (See  Theorem 3.2 in \cite{wadsworth_valuation_division} and Example 2.4 (ii) and Proposition  2.5 in \cite{jacob_wadsworth}). \\
\indent Recall now the following exact sequence (see Prop 7.7  in \cite{garibaldi_coho_inv}, \S3 in \cite{wadsworth_valuation_division} and Theorem 2, \S 3 Chapter XII in \cite{serre_local_fields})
\begin{align}\label{eqn:exact}
0 \rightarrow H^2(K^{(v)}, \mu_2) \xrightarrow{s} H^2(K_v, \mu_2) \xrightarrow{r} H^1(K^{(v)}, \mu_2) \rightarrow 0
\end{align}
 where $r$ is the residue map and $s$ is the canonical map resulting from $\mathcal{G}_{K_v} \rightarrow \mathcal{G}_{K^{(v)}}$(Here  $\mathcal{G}_F$ denotes the absolute Galois group of a field $F$). This yields an isomorphism (see equation (3.7) in \cite{wadsworth_valuation_division})
\begin{align*}
s^{-1}: ker(r) : = {}_{2}Br(K_v)_{v} &\xrightarrow{\simeq} {}_{2}Br(K^{(v)})\\
Q_v &\mapsto Q^{(v)}
\end{align*}
 where $Q^{(v)}$ is the \emph{residue quaternion algebra} given by $(\overline{d}_v, \overline{t}_v)$ (Here   $\overline{d}_v, \overline{t}_v \in K^{(v)}$ are the residues  obtained by taking  the quotients of  $d_v, t_v$ modulo the maximal ideal in $\mathcal{O}_{K_{v}}$).\\
  
\begin{defn}
Let $(h,Q)$ be a non-degenerate skew-hermitian form over $Q$ and let $(h_v,Q_v):=(h, Q)\otimes_K {K_v}$  be the form over $Q_v$ obtained via base change.  We say that $(h,Q)$ has  good reduction at $v$ if $h_v$ is obtained via base change from a  non-degenerate skew-hermitian form $\tilde{h}$ over the Azumaya algebra $A_v$   i.e, $(h_v, Q_v)\simeq (\tilde{h}, A_v) \otimes_{\mathcal{O}_{K_v}} K_v$. 
\end{defn}
\begin{rmk} \label{rmk:unr}
 By Theorem 10.3 in \cite{saltman_division}, if  $(h,Q)$ has good reduction at $v$ then $Q$ is unramified at $v$.
\end{rmk}

\begin{rmk} \label{rmk:equiv}
The universal covering of $SU(h,Q)$ has good reduction at $v$ if and only if the form $(\alpha_v h_v, Q_v)$  has good reduction   at $v$ for some $\alpha_v \in K_v^*$. (The "if" direction is clear. For the "only if" direction use the classification from \cite{srimathy_azumaya}  and the equivalence between Azumaya algebras with involutions and hermitian spaces from \S2.2 in \cite{sofie_thesis})
\end{rmk}

\section{Reduction to Quadratic Forms via Morita Equivalence} \label{sec:morita}
As before, let $Q$  denote a (not necessarily division) quaternion algebra with center $K$.
\subsection{General theory}\label{subsec:general}
The general theory of Morita equivalence for Hermitian modules can be found in Knus' book \cite{knus_book} (See Chapter 1, \S9). In particular, by Morita theory, a non-degenerate skew- hermitian form of rank $n$ over $Q$  gives rise to  a non-degenerate quadratic form of rank $2n$ over  $K$ whenever  $Q$ is split.  In this case, let us denote the quadratic form associated to the skew-hermitian from $h$ by $q_h$.  By the properties of Morita equivalence, $h$ is determined by $q_h$ and moreover, two  such skew-hermitian forms are isometric if and only if the associated quadratic forms are isometric. So whenever $Q$ is split the skew-hermitian forms over $Q$ can be completely studied by studying the associated quadratic forms over $K$. For an explicit description of Morita equivalence in this case see  \cite{scharlau_book}, p. 361-362. \\
\indent Let $h$ be  a skew-hermitian form over  a non-split $Q$.  A generic way to split  $Q$  is by extending the base field to the function field of the associated Severi-Brauer variety.  Let $K(Q)$ denote the function field of the Severi-Brauer variety associated to $Q$.  Now $Q_{K(Q)}$ is isomorphic to the matrix algebra $M_2(K(Q))$ with involution given by 
\begin{align*}
M \mapsto
 \begin{bmatrix}
0 &1\\
-1 & 0
\end{bmatrix}
M^t
\begin{bmatrix}
0 & 1\\
-1 & 0
\end{bmatrix} ^{-1}
\end{align*}
Since $Q_{K(Q)}$ is split, the skew-hermitian form $h_{K(Q)}:=h \otimes_K K(Q)$  can be reduced to a quadratic form $q_{h_{K(Q)}}$ via Morita equivalence. This reduction has nice properties due to the following  result from \cite{parimala_analogue}.
\begin{prop} (Proposition 3.3 in \cite{parimala_analogue}) 
Let $W^{-1}(Q)$ denote  the Witt group of skew-hermitian forms over $Q$. With the notations as above, the  canonical homomorphism 
\begin{align*}
W^{-1} (Q) \rightarrow W^{-1}(Q \otimes_K K(Q))
\end{align*}
is injective.
\end{prop}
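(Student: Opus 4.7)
Plan: The plan is to reduce the injectivity of $W^{-1}(Q) \to W^{-1}(Q_{K(Q)})$ to the analogous statement for ordinary quadratic forms, using Jacobson's theorem as a bridge between skew-hermitian and quadratic forms over $K$, together with Arason's computation of the kernel of the base-change map $W(K) \to W(K(Q))$.

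First, I would invoke Jacobson's theorem \cite{jacobson}, appropriately adapted for skew-hermitian forms over $(Q, \sigma_{\mathrm{can}})$ (typically by converting a skew-hermitian form $h$ into a hermitian form for an auxiliary orthogonal involution $\sigma' = \mathrm{Int}(u) \circ \sigma_{\mathrm{can}}$, where $u$ is a fixed pure quaternion, and then taking the associated reduced trace form). This yields a well-defined injective homomorphism $J \colon W^{-1}(Q) \hookrightarrow W(K)$, $h \mapsto q_h^{J}$, where $q_h^{J}$ is an explicit quadratic form over $K$ built from a diagonalization of $h$. Next, I would verify the key compatibility: under the Morita identification $W^{-1}(Q_{K(Q)}) \cong W(K(Q))$ (valid because $Q$ splits over $K(Q)$), the Morita transfer $q_{h_{K(Q)}}$ represents the same Witt class as the base change $q_h^{J} \otimes_K K(Q)$. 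The heuristic justification is that the two constructions differ only by multiples of the reduced norm form $N_Q$ of $Q$, which becomes hyperbolic precisely upon passage to $K(Q)$, so the ambiguity collapses. Granting this compatibility, Arason's theorem \cite{arason} — which applies since the Severi-Brauer conic $SB(Q)$ is a Pfister neighbor of $N_Q = \langle\!\langle a, b\rangle\!\rangle$ for $Q = (a,b)_K$, giving $\ker(W(K) \to W(K(Q))) = N_Q \cdot W(K)$ — implies that any $h$ in the kernel of the base-change map satisfies $q_h^{J} \in N_Q \cdot W(K)$. A further analysis of the image of $J$, using the structure of Jacobson forms together with the Arason-Pfister Hauptsatz, then forces $q_h^{J} = 0$, whence $h = 0$ by injectivity of $J$.

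The main obstacle is the compatibility step: verifying explicitly that Jacobson's and Morita's a priori distinct constructions agree on Witt classes after base change to $K(Q)$. This requires careful bookkeeping of the involutions (the orthogonal $\sigma'$ implicit in Jacobson versus the symplectic $\sigma_{\mathrm{can}}$ used by Morita on $M_2(K(Q)) \cong Q_{K(Q)}$), of the norm form $N_Q$ entering both pictures, and of the Morita dictionary between symplectic involutions on matrix algebras and quadratic forms on the underlying space. A secondary obstacle is excluding non-trivial elements of $N_Q \cdot W(K)$ from the image of $J$; this is where Jacobson's injectivity combined with dimension bounds from the Arason-Pfister Hauptsatz furnishes the final contradiction. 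Once these two points are settled, the rest of the argument is essentially formal.
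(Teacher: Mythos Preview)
The paper does not prove this proposition; it is quoted verbatim from \cite{parimala_analogue}, so there is no in-paper argument to compare against. That said, your proposed route has a genuine gap at the very first step.

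You assert that a Jacobson-type construction furnishes an injective homomorphism $J\colon W^{-1}(Q)\hookrightarrow W(K)$. No such injection can exist in general. Take $K=\mathbb{R}$ and $Q=\mathbb{H}$. Every invertible pure quaternion has positive reduced norm and any two are conjugate in $\mathbb{H}^{*}$, so all rank-one skew-hermitian forms over $\mathbb{H}$ are isometric; moreover $\langle i\rangle\perp\langle i\rangle$ is isotropic (for the vector $(1,j)$ one has $\bar 1\, i\, 1+\bar j\, i\, j=i-jij=i-i=0$), hence hyperbolic. Thus $W^{-1}(\mathbb{H})\cong\mathbb{Z}/2\mathbb{Z}$, whereas $W(\mathbb{R})\cong\mathbb{Z}$ is torsion-free, so there is no injection of the former into the latter. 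The underlying reason is that Jacobson's theorem concerns \emph{hermitian} forms over $(Q,\bar{\ })$: there the diagonal entries lie in $K^{*}$ and $x\mapsto h(x,x)$ is a genuine quadratic form over $K$. For skew-hermitian forms the diagonal entries are pure quaternions, $\mathrm{Trd}(h(x,x))\equiv 0$, and your proposed passage to an auxiliary orthogonal involution cannot restore an embedding into $W(K)$, as the real example already rules out its existence. Your later ingredients --- the description of $\ker\big(W(K)\to W(K(Q))\big)$ and the Arason--Pfister Hauptsatz --- are reasonable, but they must be combined with the correct structural input for $W^{-1}(Q)$; the argument in \cite{parimala_analogue} does this via exact sequences relating $W^{-1}(Q)$ to Witt groups of quadratic \'etale subalgebras of $Q$, not via an embedding into $W(K)$.
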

 This means that $h$ is hyperbolic if and only if $h_{K(Q)}$ is hyperbolic if and only if, by Morita equivalence, $q_{h_{K(Q)}}$ is hyperbolic. This philosophy of  understanding  the skew-hermitan form $h$  over $Q$ by studying the quadratic form $q_{h_{K(Q)}}$ is cleverly employed in Berhuy's paper \cite{berhuy_skew}  to find cohomological invariants. We will be using this philosophy to study reduction properties of these forms.

\subsection{An explicit example}\label{sec:example}
\begin{example}\label{example} \normalfont As before let $Q =(d, t)$ be a quaternionic (not necessarily division)  algebra over $K$ with basis $<1, i, j, ij ~|~ i^2=d, j^2 =t, ij =-ji>$. Then the Severi-Brauer variety of $Q$ has function field  $K(Q)$ given by the fraction field of  $K[x,y]/(dx^2+ty^2-1)$.  An explicit splitting of $Q$ over $K(Q)$ is given by the following.
\begin{align*}
Q \otimes_K K(Q) &\xrightarrow{\simeq} M_2(K(Q)) \\
i & \mapsto \begin{bmatrix}
 dx& -y\\
-dty & -dx
\end{bmatrix}\\
j &\mapsto \begin{bmatrix}
 ty& x\\
dtx & -ty
\end{bmatrix}\\
ij&\mapsto \begin{bmatrix}
 0&  1\\
-dt & 0
\end{bmatrix}
\end{align*}

Let $h$ be a non-degenerate skew-hermitian form over $Q$ of rank $n$. Then it is well-known that $h$ has a diagonal matrix representation over $Q$ (see for example \S6 in \cite{lewis}).  By abuse of notation, let us denote the matrix also by $h$. Since $h$ is skew-hermitian, the diagonal entries are pure quaternions.  Let 
\begin{align*}
h \simeq \bigoplus_{l=1}^n  a_l i + b_l j + c_l ij,~~~~~~ a_l, b_l, c_l \in K
\end{align*} 
Then,
\begin{align*}
h_{K(Q)} \simeq  \bigoplus_{l=1}^n a_l \begin{bmatrix}
 dx& -y\\
-dty & -dx
\end{bmatrix} 
+
b_l \begin{bmatrix}
 ty& x\\
dtx & -ty
\end{bmatrix}
+
c_l \begin{bmatrix}
 0&  1\\
-dt & 0
\end{bmatrix}
\end{align*}
Now we use the explicit description of Morita equivalence from  \cite{scharlau_book}, p. 361-362 to conclude that the quadratic form associated to $h_{K(Q)}$ has matrix given by (again by abuse of notation)
\begin{align*}
q_{h_{K(Q)}} \simeq  \bigoplus_{l=1}^n a_l \begin{bmatrix}
 -dty& -dx\\
-dx & y
\end{bmatrix} 
+
b_l \begin{bmatrix}
 dtx& -ty\\
-ty& -x
\end{bmatrix}
+
c_l \begin{bmatrix}
 -dt&  0\\
0 & -1
\end{bmatrix}
\end{align*}  

Let $N_l = Nrd_Q(a_l i+ b_l j  + c_l ij) \in K$ denote the reduced norm  of the quaternion $(a_l i+ b_l j  + c_l ij)$ in $Q$.  Then diagonalizing the above matrix yields
\begin{align*}
q_{h_{K(Q)}} \simeq \bigoplus_{l=1}^n \begin{bmatrix}
(a_l y  - b_lx - c_l) & 0\\
0 &  -(a_l y - b_lx - c_l )N_l
\end{bmatrix}
\end{align*}
We will be using this matrix representation of $q_{h_{K(Q)}}$ later.
\end{example}

\begin{rmk}\label{rmk:similar}
From the above description of Morita equivalence, it is clear that for $\lambda \in K^*$, 
\begin{align*}
q_{(\lambda  h)_{K(Q)}} = \lambda q_{h_{K(Q)}}
\end{align*}
\end{rmk}

\begin{prop}\label{prop:goodred}
Let $h$ be a non-degenerate skew-hermitian form over $Q$. If $h$ has good reduction at $v$ then  $Q$ is unramified at $v$ and 
\begin{enumerate}[(i)]
\item $q_{h_v}$ is unramified at $v$ if $Q_v$ is split or
\item $h_v$ has a diagonal matrix representation  with diagonal entries taking values in $A_v^*$ if $Q_v$ is not split. 
\end{enumerate}
\end{prop}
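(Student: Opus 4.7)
The plan is to unpack the definition of good reduction and then handle the two sub-cases separately. By hypothesis, there exists a non-degenerate skew-hermitian form $\tilde{h}$ over the Azumaya $\mathcal{O}_v$-algebra $A_v$ such that $(\tilde{h}, A_v) \otimes_{\mathcal{O}_{K_v}} K_v \simeq (h_v, Q_v)$; the first assertion that $Q$ is unramified at $v$ is Remark \ref{rmk:unr} (via Saltman's Theorem 10.3), so only (i) and (ii) require work.

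For (ii), when $Q_v$ is a quaternion division algebra the Azumaya order $A_v$ coincides with the valuation ring of $Q_v$ and is a non-commutative local ring whose residue algebra is a quaternion division algebra over $K^{(v)}$ carrying the induced symplectic involution. The strategy is to invoke the standard diagonalization of non-degenerate hermitian and skew-hermitian forms over such local rings (\cite{knus_book}, Chapter I) to write $\tilde{h} \simeq \langle \alpha_1, \ldots, \alpha_n \rangle$ with pure quaternionic diagonal entries $\alpha_l \in A_v$. Non-degeneracy, detected modulo the maximal ideal of $A_v$, forces each $\alpha_l$ to lie in $A_v^*$. Base changing this diagonal decomposition to $K_v$ yields the claimed diagonal representation of $h_v$ over $Q_v$ with diagonal entries in $A_v^*$.

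For (i), the splitness and unramifiedness of $Q_v$ combined with Saltman's theorem give $A_v \simeq M_2(\mathcal{O}_v)$ with the symplectic involution. The idea is to run Morita equivalence for hermitian modules over the local ring $\mathcal{O}_v$ (\cite{knus_book}, Chapter I, \S9) to translate the non-degenerate skew-hermitian form $\tilde{h}$ over $A_v$ into a non-degenerate quadratic form $\tilde{q}$ of rank $2n$ over $\mathcal{O}_v$. Because Morita equivalence is functorial under base change, $\tilde{q} \otimes_{\mathcal{O}_v} K_v \simeq q_{h_v}$. A non-degenerate quadratic form over the local ring $\mathcal{O}_v$ admits a diagonalization with unit entries, so its class in $W(K_v)$ lies in the subring $W_0(K_v)$, which is precisely the assertion that $q_{h_v}$ is unramified at $v$.

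The only slightly delicate ingredient, and the point I would treat most carefully, is the compatibility of the Morita correspondence with base change from $\mathcal{O}_v$ to $K_v$ used in (i); I expect this to follow either from functoriality of the categorical equivalence or, if one wants a hands-on check, from a computation in the style of Example \ref{example}. Otherwise, both sub-cases reduce to well-known diagonalization statements over (commutative or non-commutative) local rings, so no further obstacles are anticipated.
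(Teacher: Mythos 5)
Your proposal is correct and follows essentially the same route as the paper: the paper's proof is precisely the two-sentence sketch that (i) is "clear by Morita theory" over $\mathcal{O}_v$ and (ii) follows because $A_v$ is a local ring without zero divisors, so a non-degenerate skew-hermitian form over it diagonalizes with unit entries (the paper cites Propositions 6.8 and 3.2 of its reference on hermitian forms over such orders, where you cite Knus, but the content is the same). Your added care about base-change compatibility of the Morita correspondence in (i) is a reasonable elaboration of what the paper leaves implicit.
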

\begin{proof}
The case when $Q_v$ is split is clear by Morita theory. When $Q_v$ is not split, the claim follows by observing that  $A_v$ has no zero divisors and hence any non-degenerate skew-hermitian form over $A_v$ has a diagonal representation with units along the diagonal (see Proposition 6.8 and Proposition 3.2 in \cite{sofie}).
\end{proof}

\section{Extension of valuation from $K$ to $K(Q)$} \label{sec:ext}
In this section assume that $Q$ is a quaternion \emph{division} algebra over $K$ unramified at  $v$. We first extend the valuation $v$ from $K_v$ to $K_v(Q_v)$. There are two cases: 
\begin{enumerate}[(i)]
\item  $Q_v$ is not split.  Then as already discussed in \S \ref{sec:unramified},  $Q_v \simeq A_v \otimes_{\mathcal{O}_{K_v}}  K_v$ where $A_v = (d_v,t_v)$ is a quaternionic Azumaya algebra over  $\mathcal{O}_{K_v}$ with $d_v, t_v \in  \mathcal{O}_{K_v}^*$ (hence $v(d_v) = v(t_v) =0$).  We extend the valuation $v$  on $K_v$ to $K_v(Q_v)$ as follows. First we extend the valuation $v$ from $K_v$ to the valuation $v'$ on $K_v[x]$ by
\begin{align} \label{eqn:ext1}
v'(\sum_{l=0}^{m} a_l x^l) = min\{ v(a_0), \cdots, v(a_m)\} 
\end{align}
and then extend to $K_v(x)$ by
\begin{align} \label{eqn:ext2}
v'(\frac{f}{g}) = v'(f) -v'(g)
\end{align}
This is indeed a valuation on $K_v(x)$ with residue field $K^{(v)}(\overline{x})$, where $\overline{x}$ is the residue of $x$ at $v'$ and ramification index $e_{v'/v} = 1$ (see Example 2.3.3 in \cite{fried_field_arithmetic}). \\
 \indent Now  $K_v(Q_v) = K_v(x)[y]/(d_vx^2+t_vy^2-1)$  is a quadratic extension of $K_v(x)$ . Let $\tilde{v}$  denote any valuation on $K_v(Q_v)$ extending the one on $K(x)$ given above (One can always extend valuations to a larger field by Theorem 4.1 in \cite{lang_algebra}). 
\item  $Q_v$ is split. In this case $K_v(Q_v) \simeq  K_v(x)$ where $x$ is transcendental. In this case we extend the valuation on $K_v(Q_v)$ using (\ref{eqn:ext1}) and (\ref{eqn:ext2}).
\end{enumerate}

\begin{prop}\label{prop:properties}
The valuation in $\tilde{v}$ on $K_v(Q_v)$ defined above has the following properties.
\begin{enumerate}[(i)]
\item The residue field  of the valuation $\tilde{v}$, denoted by  $K_v(Q_v)^{(\tilde{v})}$ is  isomorphic to $K^{(v)}(Q^{(v)})$,  the function field of the Severi-Brauer variety associated to the residue division algebra $Q^{(v)}$ over $K^{(v)}$.
\item The ramification index $e_{\tilde{v}/v} =1$.
\item The valuation $\tilde{v}$  is the unique one extending $v'$. In particular, for $\alpha \in  K_v(Q_v)$
\begin{align*}
\tilde{v} (\alpha) = \frac{1}{2} v'(Norm_{K_v(Q_v)/K_v(x)}(\alpha)) 
\end{align*}
\item For $a_i \in \mathcal{O}_{K_v}$, $\tilde{v}(a_1y + a_2x +a_3) = 0$ if and only if $min\{v(a_i)\} = 0$.
\end{enumerate}
\end{prop}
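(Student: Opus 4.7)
The plan is to split into the two cases used in the construction of $\tilde v$. In the split case $Q_v \simeq M_2(K_v)$, one has $K_v(Q_v) = K_v(x)$ and $\tilde v = v'$, so all four assertions are immediate: (i)--(ii) are the standard properties of the Gauss valuation (here $Q^{(v)}$ is also split, so $K^{(v)}(Q^{(v)}) \simeq K^{(v)}(\bar x)$), (iii) is trivial since the ``extension'' from $K_v(x)$ to itself is trivial, and (iv) is a direct computation with $v'$. The real content is in the non-split case, on which I focus below.

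In the non-split case the crucial preliminary step is to show that the reduced polynomial $\bar t_v Y^2 - (1 - \bar d_v \bar x^2)$ is irreducible in $K^{(v)}(\bar x)[Y]$. This reduces to showing that $(1-\bar d_v \bar x^2)/\bar t_v$ is not a square in $K^{(v)}(\bar x)$, and this in turn follows from $Q^{(v)}$ being a quaternion division algebra (as recorded in \S\ref{sec:unramified}) combined with the injectivity of $Br(K^{(v)}) \hookrightarrow Br(K^{(v)}(\bar x))$: a square root of $(1-\bar d_v \bar x^2)/\bar t_v$ in $K^{(v)}(\bar x)$ would produce a $K^{(v)}(\bar x)$-rational point on the conic $\bar d_v X^2 + \bar t_v Y^2 = 1$ and thereby force $Q^{(v)}$ to split. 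I expect this specialization/Brauer-injectivity step to be the one real technical hinge of the argument.

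With irreducibility in hand, a quick check using $v(d_v) = v(t_v) = 0$ gives $v'(1 - d_v x^2) = 0$, hence $\tilde v(y) = 0$ and $y$ has a well-defined residue $\bar y$. Working in the completion $\widehat{K_v(x)}$, the minimal polynomial $t_v Y^2 - (1 - d_v x^2)$ of $y$ has irreducible reduction and therefore remains irreducible over $\widehat{K_v(x)}$, so Hensel's lemma simultaneously delivers: uniqueness of the extension $\tilde v$ of $v'$, the equality $e_{\tilde v/v'}=1$ (which combined with the Gauss fact $e_{v'/v}=1$ gives (ii)), and the residue-field description
\[
  K_v(Q_v)^{(\tilde v)} \;\simeq\; K^{(v)}(\bar x)[\bar y]\big/\bigl(\bar t_v \bar y^2 - (1 - \bar d_v \bar x^2)\bigr) \;\simeq\; K^{(v)}(Q^{(v)}),
\]
proving (i). The explicit norm formula in (iii) is then the general identity $w(\alpha) = \tfrac1n v(N_{L/K}(\alpha))$ that holds for the unique extension $w$ of a valuation to a finite separable extension $L/K$ of degree $n$ (with matching normalization $w|_K = v$), applied with $n=2$.

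Finally for (iv): since $\tilde v(x) = \tilde v(y) = 0$ and $a_i \in \mathcal O_{K_v} \subset \mathcal O_{\tilde v}$, the element $a_1 y + a_2 x + a_3$ already lies in the valuation ring, so the question is exactly when its residue $\bar a_1 \bar y + \bar a_2 \bar x + \bar a_3 \in K^{(v)}(Q^{(v)})$ is nonzero. I would close the argument by noting that $1, \bar x, \bar y$ are $K^{(v)}$-linearly independent in $K^{(v)}(Q^{(v)})$: $\bar x$ is transcendental over $K^{(v)}$, and $\bar y \notin K^{(v)}(\bar x)$ since $\bar y$ is a degree-two generator of $K^{(v)}(Q^{(v)})$ over $K^{(v)}(\bar x)$ (again using irreducibility). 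Hence the residue vanishes precisely when every $\bar a_i$ does, i.e.\ iff $\min_i v(a_i) > 0$, which establishes (iv) in both directions.
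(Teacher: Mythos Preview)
Your proposal is correct and hinges on the same key observation as the paper: in the non-split case, the residue quaternion $Q^{(v)}$ is division, which forces the residue degree of $\tilde v$ over $v'$ to be $2$ and hence $e_{\tilde v/v'}=1$ and uniqueness. The execution differs in two places. For (i)--(iii), the paper argues via the degree formula $[K_v(Q_v):K_v(x)] = e_{\tilde v/v'} f_{\tilde v/v'} g$ together with the embedding $K^{(v)}(Q^{(v)})\hookrightarrow K_v(Q_v)^{(\tilde v)}$, whereas you check irreducibility of the reduced quadratic and invoke Hensel's lemma over the completion $\widehat{K_v(x)}$; both routes are equivalent and equally short. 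For (iv), the paper plugs directly into the norm formula from (iii),
\[
\tilde v(a_1y+a_2x+a_3)=\tfrac12\,v'\!\Bigl((a_2^2+\tfrac{d_v}{t_v}a_1^2)x^2+2a_2a_3x+(a_3^2-\tfrac{1}{t_v}a_1^2)\Bigr),
\]
and reads off the Gauss valuation, while you instead observe that $a_1y+a_2x+a_3$ is integral and compute its residue, using the $K^{(v)}$-linear independence of $1,\bar x,\bar y$ in $K^{(v)}(Q^{(v)})$. Your argument for (iv) is arguably cleaner (it avoids an explicit norm computation) and is an independent, self-contained check; the paper's has the advantage of reusing (iii) directly. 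Either way, the underlying content is identical.
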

\begin{proof}
All of the above claims are clear when $Q_v$ splits. So assume that $Q_v$ is not split. 
To prove (i), note that $\tilde{v}(y^2) = v'(\frac{1}{t_v} - \frac{d_v}{t_v}x^2) =0$ since $d_v, t_v$ are units in $\mathcal{O}_{K_v}$. Hence $\tilde{v}(y) =0$ and $y \in \mathcal{O}_{\tilde{v}}^*$.  Let $\overline{y}, \overline{x}, \overline{d_v}, \overline{t_v}$ denote the corresponding residues at $\tilde{v}$. Then we have an embedding,
\begin{align*}
F:=K^{(v)}(\overline{x})[\overline{y}]/(\overline{d}_v\overline{x}^2+\overline{t}_v\overline{y}^2-1) \hookrightarrow K_v(Q_v)^{(\tilde{v})}
\end{align*}

Now $(\overline{d}_v\overline{x}^2+\overline{t}_v\overline{y}^2-1)$ is the conic corresponding to the residue quaternion algebra $Q^{(v)}$. So $F\simeq K^{(v)}(Q^{(v)})$. Moreover $Q^{(v)}$ is not split because $Q$ is unramified at $v$ and due to injectivity of $s$ in the exact sequence of  (\ref{eqn:exact}) in \S \ref{sec:unramified}. Therefore the conic $(\overline{d}_v\overline{x}^2+\overline{t}_v\overline{y}^2-1)$ is not hyperbolic over $K^{(v)}(\overline{x}$) and  hence $ [K^{(v)}(Q^{(v)}) : K^{(v)}(\overline{x})] = 2$. But since $[K_v(Q_v)^{(\tilde{v})}:K^{(v)}(\overline{x})] \leq [K_v(Q_v): K_v(x)] =2$,  we conclude that  $K_v(Q_v)^{(\tilde{v})}$ is  isomorphic to $K^{(v)}(Q^{(v)})$. \\
\indent We now prove (ii) and (iii). Let $g$ be the number of distinct valuations on $K_v(Q_v)$ extending $v'$.  From the above argument we see that  $f_{\tilde{v}/v'} =[K_v(Q_v)^{(\tilde{v})}: K^{(v)}(\overline{x})] =2$. This implies $e_{\tilde{v}/v'} =1$ and  $g=1$ from the equality
\begin{align*}
[K_v(Q_v): K_v(x)] = e_{\tilde{v}/v'} f_{\tilde{v}/v'} g
\end{align*}
Also as mentioned before $e_{v'/v} =1$ (from Example 2.3.3 in \cite{fried_field_arithmetic}). This proves that $e_{\tilde{v}/v} =1$. From the fact the Galois group $G(K_v(Q_v)/K_v(x))$ acts transitively on the extensions on the valuation $v'$ on  $K_v(Q_v)$ (Exercise 8, Chapter 2 in \cite{fried_field_arithmetic}) and   $g=1$, we get (iii).  \\
\indent Note that by (iii) we have 
\begin{align*}
\tilde{v}(a_1y + a_2x + a_3) &= \frac{1}{2} v'(Norm _{K_v(Q_v)/K_v(x)}(a_1y + a_2x + a_3))\\
&= \frac{1}{2}v'(-a_1^2(\frac{1}{t_v} - \frac{d_v}{t_v}x^2) + a_2^2x^2 + a_3^2 + 2a_2a_3x)\\
&=\frac{1}{2}v'((a_2^2 +\frac{d_v}{t_v}a_1^2)x^2 + 2a_2a_3 x + (a_3^2 - \frac{1}{t_v}a_1^2))
\end{align*}
From this (iv) easily follows.
\end{proof}

By abuse of notation, let $\tilde{v}$ also denote the valuation on $K(Q)$ obtained by restriction via the  embedding $K(Q) \hookrightarrow K(Q) \otimes_K  K_v \simeq K_v(Q_v)$.\\
\indent We now prove Theorem \ref{thm:cool} that relates good reduction of a   skew-hermitian form $h$ over $Q$  with center $K$  to the ramification of the associated  quadratic form $q_{h_{K(Q)}}$.
\\
\\
\emph{Proof of Theorem \ref{thm:cool}:}
Given a discrete valuation $v$ on $K$, let $\tilde{v}$ be the valuation on $K(Q)$ described as above. By hypothesis, $h$ has good reduction at $v$. So  by Remark \ref{rmk:unr},  $Q$ is necessarily unramified at $v$.  We need to show that  $q_{h_{K(Q)}}$ is unramified  $\tilde{v}$. There are two cases:
\begin{enumerate}[(i)]
\item $Q_v$ is split. Since $K(Q)_{\tilde{v}}$ contains $K_v(Q_v)$ as a subfield, it suffices to show that $q_{h_{K_v(Q_v)}}$ is unramified at $\tilde{v}$. But  by functoriality of Morita equivalence, we have  $q_{h_{K_v(Q_v)}} = q_{h_v} \otimes_{K_v} K_v(Q_v)$. Together with  Proposition \ref{prop:goodred}(i),  we get that $q_{h_{K(Q)}}$ is unramified  $\tilde{v}$.
\item $Q_v$ is not split. With notation as  in \S\ref{sec:unramified}, $Q_v = A_v \otimes_{\mathcal{O}_{K_v}} K_v$, where $A_v$, the valuation ring of $Q_v$,  is a quaternionic Azumaya algebra over $\mathcal{O}_{K_v}$ given by   $A_v = <1, i, j , ij ~|~ i^2 =d_v, j^2 = t_v, ij = -ji ,d_v, t_v \in A_v^*>$. By Proposition \ref{prop:goodred}(ii), $h_v$ has a matrix representation that is diagonal with diagonal entries taking values in $A_v^*$. Consider one such representation
\begin{align*}
h_v \simeq \bigoplus_{l=1}^n  a_l i + b_l j + c_l ij
\end{align*}
where $ a_l i + b_l j + c_l ij  \in A_v^*$. Let $N_l = Nrd_Q(a_l i+ b_l j  + c_l ij)$ be the reduced norm. Then by (\ref{eqn:extval}),
\begin{align*}
0&=v(a_l i + b_l j + c_l ij) \\
&= \frac{1}{2}v(N_l )\\
&= \frac{1}{2}v(-a_l^2d_v -b_l^2t_v +c_l^2d_vt_v)
\end{align*}
This implies that  for each $l$ that $min\{v(a_l), v(b_l), v(c_l)\}  = 0$. Now  recall that by Example \ref{example} in \S\ref{sec:example}, 
\begin{align*}
q_{h_{K_v(Q_v)}} \simeq \bigoplus_{l=1}^n \begin{bmatrix}
(a_l y  - b_lx - c_l) & 0\\
0 &  -(a_l y - b_lx - c_l )N_l
\end{bmatrix}
\end{align*}
We are now done by Proposition \ref{prop:properties}(iv).
\end{enumerate}

\section{Proof of Theorem \ref{thm:main}}\label{sec:proofmain}
We begin with an easy lemma. We stick to the notations as before.
\begin{lem}\label{lem:residue}
Let $Q$ be  unramified at  $v$. Then we have 
\begin{align*}
r_2^i([Q] \cup H^{i-2}(K, \mu_2)) \subseteq [Q^{(v)}] \cup H^{i-3}(K^{(v)}, \mu_2)
\end{align*}
where $r_2^i$ is the residue map given by  (\ref{eqn:res2}) in \S\ref{sec:main}.
\end{lem}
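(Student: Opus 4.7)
The plan is to reduce the claim to the completion $K_v$ and then exploit the standard ``Serre decomposition'' of the Galois cohomology of a Henselian discretely valued field with $\mu_2$-coefficients.

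First, by functoriality of restriction and of the cup product, the residue map $r_2^i$ factors through the restriction $H^i(K,\mu_2) \to H^i(K_v,\mu_2)$. So it suffices to prove the inclusion after replacing $[Q]$ and an arbitrary $\beta \in H^{i-2}(K,\mu_2)$ by their images $[Q_v]\in H^2(K_v,\mu_2)$ and $\beta_v\in H^{i-2}(K_v,\mu_2)$, i.e.\ to work entirely over $K_v$.

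Since $Q$ is unramified at $v$, the exact sequence (\ref{eqn:exact}) yields $[Q_v] = s([Q^{(v)}])$, where $s$ is the inflation, a ring homomorphism for the cup product. A standard fact for a Henselian discretely valued field with residue characteristic $\neq 2$ (see \cite{serre_local_fields} or \cite{garibaldi_coho_inv}) is that, upon choosing a uniformizer $\pi$ of $K_v$, every class $\gamma\in H^n(K_v,\mu_2)$ admits a unique decomposition
\[
\gamma = s(\gamma_0) + s(\gamma_1)\cup(\pi),\qquad \gamma_0\in H^n(K^{(v)},\mu_2),\ \gamma_1\in H^{n-1}(K^{(v)},\mu_2),
\]
with residue characterised by $r(\gamma)=\gamma_1$. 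Applied to $\beta_v$, this gives $\beta_v = s(\beta_0) + s(\beta_1)\cup(\pi)$ with $\beta_0\in H^{i-2}(K^{(v)},\mu_2)$ and $\beta_1 = r_2^{i-2}(\beta_v)\in H^{i-3}(K^{(v)},\mu_2)$.

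Using that $s$ is a ring homomorphism I would then compute
\[
[Q_v]\cup \beta_v \;=\; s\bigl([Q^{(v)}]\cup\beta_0\bigr) \;+\; s\bigl([Q^{(v)}]\cup\beta_1\bigr)\cup(\pi).
\]
The first summand is inflated, hence in the kernel of $r$; by the Serre formula the residue of the second summand is $[Q^{(v)}]\cup\beta_1$. Therefore $r_2^i([Q]\cup\beta) = [Q^{(v)}]\cup r_2^{i-2}(\beta_v)$, which lies in $[Q^{(v)}]\cup H^{i-3}(K^{(v)},\mu_2)$, as required.

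The only mildly delicate point, and hence the ``main obstacle,'' is citing the Serre decomposition together with the associated compatibility of the cup product with inflation and residue in a form that is precisely right for $\mu_2$-coefficients; once these are in hand the argument is essentially a one-line manipulation. In particular no case analysis on whether $Q_v$ is split is needed, since in the split case $[Q^{(v)}]=0$ and the inclusion is trivially satisfied.
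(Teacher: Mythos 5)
Your argument is correct and follows essentially the same route as the paper: both identify $[Q_v]$ with the inflation $s([Q^{(v)}])$ via the exact sequence (\ref{eqn:exact}) and then apply the formula $r\bigl(s(\alpha)\cup\beta\bigr)=\alpha\cup r(\beta)$. The only difference is that the paper cites this last formula (Chapter II, \S 7, Exercise 7.12 in \cite{garibaldi_coho_inv}) while you re-derive it from the uniformizer decomposition $\gamma=s(\gamma_0)+s(\gamma_1)\cup(\pi)$, which is a legitimate (and for $\mu_2$-coefficients sign-free) proof of the same fact.
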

\begin{proof}
 Since $Q$ is unramified at $v$, $r_2^i([Q]) =0$. Therefore by the exact sequence (see Proposition 7.7 in \cite{garibaldi_coho_inv})
\begin{align*}
0 \rightarrow H^i(K^{(v)}, \mu_2) \xrightarrow{s_2^i} H^i(K_v, \mu_2) \xrightarrow{r_2^i}H^{i-1}(K^{(v)}, \mu_2) \rightarrow 0
\end{align*}
we have that $[Q_v]$ is uniquely identified as the image of the residue quaternion algebra $[Q^{(v)}]$ over $K^{(v)}$ under $s_2^i$. The result now follows from Chapter II \S7, Exercise 7.12 in \cite{garibaldi_coho_inv}.
\end{proof}
Now let us  recall some of the facts discussed in \S 1.2.2 of Berhuy's paper \cite{berhuy_skew}. To simplify notations, let $F(Q)$ be the function field of the Severi-Brauer variety associated to a quaternion algebra $Q$ over an arbitrary field $F$. Now consider the valuations $W$ on $F(Q)$ arising from closed points on the conic defined by $Q$. Then for every $l \geq 1$, the kernel of the corresponding residue map 
\begin{align*}
r_l^i:  H^i(F(Q), \mu_l^{\otimes i-1}) &\rightarrow \bigoplus_{w \in W} H^{i-1}(F(Q)^{(w)}, \mu_l ^{\otimes i-2})
\end{align*} 
is the unramified cohomology group with respect to the valuations in $W$ denoted by $H^i_{nr}(F(Q), \mu_l^{\otimes i-1})$.  We now let 
\begin{align*}
\mathbb{Q}/\mathbb{Z} (i-1) = \lim_{\rightarrow} \mu_l^{\otimes{i-1}}
\end{align*}
where the limit is taken over all the integers prime to the characteristic of $F$.  Then $H^{i}(F(Q), \mathbb{Q}/\mathbb{Z} (i-1))$ is the  direct limit of the groups  $H^i(F(Q), \mu_l^{\otimes i-1}) $ with respect to the maps 
\begin{align*}
H^i(F(Q), \mu_m^{\otimes i-1}) \rightarrow H^i(F(Q), \mu_n^{\otimes i-1}) , m|n
\end{align*}
The corresponding residue maps are compatible to each other yielding the unramified cohomology $H^i_{nr}(F(Q), \mathbb{Q}/\mathbb{Z}(i-1))$.   Moreover for any field $E$, $H^{i}(E, \mu_{2^m} ^{\otimes i-1})$ is identified with the $2^m$ torsion subgroup of $H^i(E, \mathbb{Q}/\mathbb{Z}(i-1))$ and hence the canonical map of change of coefficients
\begin{align} \label{eqn:injective}
H^{i}(E, \mu_{2^m} ^{\otimes i-1}) \hookrightarrow H^{i}(E, \mu_{2^n} ^{\otimes i-1}) ~~~m|n
\end{align}
is injective. We now recall the following results from  \cite{berhuy_skew}.
\begin{prop} \label{prop:berhuy} (Proposition 7 and Proposition 9 in \cite{berhuy_skew})
Let $h$ be a skew-hermitian form over a quaternionic $F$-algebra $Q$. Then 
\begin{enumerate}[(i)]
\item $e_i([q_{h_{F(Q)}}]) \in H^i_{nr}(F(Q), \mu_2)$
\item For $i \geq 1$, the restriction map yields an isomorphism  
\begin{align*}
Res_{F(Q)/F} : H^i(F, \mathbb{Q}/\mathbb{Z} (i-1) /([Q] \cup H^{i-2}(F, \mu_2)) \simeq H_{nr}^i(F(Q), \mathbb{Q}/\mathbb{Z}(i-1)) 
\end{align*}
where $[Q] \cup H^{i-2}(F, \mu_2)$ is viewed as a subgroup of $H^i(F, \mathbb{Q}/\mathbb{Z} (i-1))$ (If $i=1$, $[Q] \cup H^{i-2}(F, \mu_2) = 0$ ). In particular,  the inverse image  denoted by $S$, of   $2$-torsion subgroup of the unramified cohomology group  $H_{nr}^i(F(Q), \mu_2)$ under $Res_{F(Q)/F}$ is  a subgroup of $H^i(F, \mu_4^{\otimes{i-1}}) /([Q] \cup H^{i-2}(F, \mu_2))$. Thus we have  an isomorphism $\jmath$ obtained by restricting  $Res_{F(Q)/F}$ to $S$,
\begin{align*}
\jmath: S ~\xrightarrow{\simeq}  H_{nr}^i(F(Q), \mu_2)
\end{align*}

\end{enumerate}
\end{prop}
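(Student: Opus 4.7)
My plan is to treat the two parts of the proposition separately: part (i) is geometric and reduces to constructing an integral model of $q_{h_{F(Q)}}$ at each closed point of the Severi--Brauer conic, while part (ii) rests on a deep but well-known computation of the unramified cohomology of the function field of a quaternionic conic.

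For part (i), I would fix a closed point $P$ of the Severi--Brauer conic $C$ of $Q$ and let $w$ be the corresponding valuation on $F(Q)$, so $\mathcal{O}_{C,P}$ is a DVR with fraction field $F(Q)$. Since $Q$ splits over $F(Q)$, the Azumaya algebra $Q \otimes_F \mathcal{O}_{C,P}$ becomes a matrix algebra after extending scalars to $F(Q)$; by purity of the Brauer group for a DVR the restriction $Br(\mathcal{O}_{C,P}) \hookrightarrow Br(F(Q))$ is injective, so $Q \otimes_F \mathcal{O}_{C,P} \simeq M_2(\mathcal{O}_{C,P})$. The skew-hermitian form $h$ extends to a non-degenerate skew-hermitian form over this Azumaya algebra, and Morita equivalence over the local ring $\mathcal{O}_{C,P}$ then yields an integral quadratic form whose generic fibre is $q_{h_{F(Q)}}$; hence $q_{h_{F(Q)}}$ is unramified at $w$. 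Applying Voevodsky's proof of the Milnor conjecture together with the commutation of $e_i$ with residues (Arason's Satz 4.11 cited in the excerpt) transfers this to $e_i([q_{h_{F(Q)}}]) \in H^i_{nr}(F(Q), \mu_2)$.

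For part (ii), the inclusion $[Q] \cup H^{i-2}(F, \mu_2) \subseteq \ker(Res_{F(Q)/F})$ is immediate since $[Q]_{F(Q)} = 0$, and the image of $Res_{F(Q)/F}$ automatically lies in $H^i_{nr}$ because a constant class has trivial residue at every geometric valuation. The content is then that the induced map on the quotient is an \emph{isomorphism} onto $H^i_{nr}(F(Q), \mathbb{Q}/\mathbb{Z}(i-1))$. Injectivity (i.e.\ identifying $\ker(Res_{F(Q)/F})$ with $[Q] \cup H^{i-2}(F, \mu_2)$) is the Amitsur-type theorem for quaternionic Severi--Brauer varieties: I would run the Hochschild--Serre spectral sequence for $F^{sep}/F$ using $C_{F^{sep}} \simeq \mathbb{P}^1$ together with the Merkurjev--Suslin structure of the relevant $K$-theory. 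Surjectivity onto the unramified subgroup is proved via the Bloch--Ogus spectral sequence on $C$, using that the codimension $\geq 2$ contributions vanish and that the only obstruction to lifting an unramified class from $F(Q)$ to $F$ is a cup product with $[Q]$ (Peyre in degree $3$, Rost/Voevodsky in higher degrees). The \emph{in particular} statement on $S$ is then purely formal: if $\bar x$ has image in the $2$-torsion of $H^i_{nr}(F(Q), \mathbb{Q}/\mathbb{Z}(i-1))$, then $2\bar x = 0$ in the quotient, so $2x \in [Q] \cup H^{i-2}(F, \mu_2)$; the latter is itself $2$-torsion because $[Q]$ is, hence $4x = 0$, placing $x$ in $H^i(F, \mu_4^{\otimes i-1})$ via the injection (\ref{eqn:injective}).

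The main obstacle is the surjectivity in part (ii): showing that \emph{every} unramified class on $F(Q)$ descends (modulo $[Q] \cup (-)$) to a class on $F$. Part (i) and the kernel computation in (ii) reduce to geometric lifting arguments and the classical Brauer theory of Severi--Brauer varieties, but the surjectivity inevitably invokes the substantial motivic-cohomological machinery of Merkurjev--Suslin, Rost, Peyre, and Voevodsky.
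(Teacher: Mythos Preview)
The paper does not give its own proof of this proposition: it is explicitly stated as a recollection of Proposition~7 and Proposition~9 from Berhuy's paper \cite{berhuy_skew}, with no argument supplied beyond the citation. There is therefore nothing in the paper to compare your proposal against.

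That said, your sketch is a reasonable reconstruction of the underlying arguments. For part~(i), your approach via splitting $Q \otimes_F \mathcal{O}_{C,P}$ as an Azumaya algebra (using injectivity of $Br(\mathcal{O}_{C,P}) \hookrightarrow Br(F(Q))$) and then running Morita equivalence integrally is exactly the right idea, and the final step via Arason's compatibility of $e_i$ with residues is correct. For part~(ii), you correctly identify that the inclusion $[Q] \cup H^{i-2}(F,\mu_2) \subseteq \ker(Res)$ and the containment of the image in $H^i_{nr}$ are formal, while the isomorphism itself is the deep input; in the literature this is due to Kahn, Rost, Sujatha, and Peyre (Berhuy himself cites Kahn's work on motivic cohomology of quadrics), and your attribution to the Merkurjev--Suslin/Rost/Voevodsky circle of ideas is appropriate. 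Your derivation of the ``in particular'' clause --- $2\bar x = 0$ forces $2x \in [Q] \cup H^{i-2}(F,\mu_2)$, which is itself $2$-torsion, hence $4x = 0$ --- is exactly right.

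The only caveat is that your proposal is a high-level sketch rather than a proof: actually establishing surjectivity in~(ii) requires reproducing or citing the substantial motivic-cohomological machinery you name, which is well beyond the scope of this paper and is precisely why the author simply cites Berhuy.
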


\noindent \textbf{Notation:} Let $\tilde{V}$ denote the collection of valuations $\tilde{v}$ on $K(Q)$ obtained by extending the valuations $v \in V$ on $K$ as described in \S\ref{sec:ext}.    To simplify notations, from now on  let  $L = K(Q)$.\\

We  now  prove that the hypothesis of Theorem \ref{thm:main} implies the following finiteness theorem.
\begin{thm} \label{thm:finite}
Let $(K,V)$ satisfy the hypothesis of Theorem \ref{thm:main}.  Then for each $i \geq 1$, the kernel of the residue map
\begin{align*}
R^i_2 : H^i_{nr}(L, \mu_2) \rightarrow \bigoplus_{\tilde{v} \in \tilde{V}} H^{i-1}(L^{(\tilde{v})}, \mu_2)
\end{align*}
denoted by $H^i_{nr} (L, \mu_2)_{\tilde{V}}$, is finite and is bounded by
\begin{align*}
|H^i_{nr} (L, \mu_2)_{\tilde{V}}| \leq |H^i (K , \mu_2)^{V}|\cdot |H^i(K, \mu_4^{\otimes i-1})_{V}|
\end{align*}
\end{thm}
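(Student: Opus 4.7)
The strategy is to use Berhuy's isomorphism $\jmath$ (Proposition~\ref{prop:berhuy}(ii)) to transfer the problem from $L$ down to $K$, then decompose the bound into a ``residue'' factor contributing $|H^i(K, \mu_2)^V|$ and an ``unramified'' factor contributing $|H^i(K, \mu_4^{\otimes i-1})_V|$.

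\emph{Step 1 (Residue constraint on the lift).} For $\alpha \in H^i_{nr}(L, \mu_2)_{\tilde V}$, fix a lift $\tilde\beta \in H^i(K, \mu_4^{\otimes i-1})$ of $\jmath^{-1}(\alpha) \in S$. Since $e_{\tilde v / v} = 1$ by Proposition~\ref{prop:properties}(ii), residues commute with restriction, so
\[
\mathrm{Res}_{L^{(\tilde v)}/K^{(v)}}(r_{4,v}^{i}(\tilde\beta)) \;=\; r_{4,\tilde v}^{i}(\mathrm{Res}_{L/K}(\tilde\beta)) \;=\; r_{4,\tilde v}^{i}(\alpha) \;=\; 0.
\]
Since $L^{(\tilde v)} \cong K^{(v)}(Q^{(v)})$ by Proposition~\ref{prop:properties}(i) and $Q^{(v)}$ is defined because $Q$ is unramified at $v$ (Remark~\ref{rmk:unr}), applying Proposition~\ref{prop:berhuy}(ii) at the residue field $K^{(v)}$ in degree $i-1$ forces
\[
r_{4,v}^{i}(\tilde\beta) \in [Q^{(v)}] \cup H^{i-3}(K^{(v)}, \mu_2) \subseteq H^{i-1}(K^{(v)}, \mu_2) \quad \text{for all } v \in V.
\]

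\emph{Step 2 (Residue invariant).} Using Lemma~\ref{lem:residue}, I define
\[
\Psi: H^i_{nr}(L, \mu_2)_{\tilde V} \longrightarrow H^i(K, \mu_2)^V, \qquad \alpha \mapsto [(r_{4,v}^{i}(\tilde\beta))_v],
\]
which is well-defined because modifying $\tilde\beta$ by an element of $[Q] \cup H^{i-2}(K, \mu_2)$ shifts the residue vector by an element of $\mathrm{image}(r_2^i)$. This yields $|\mathrm{image}(\Psi)| \leq |H^i(K, \mu_2)^V|$.

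\emph{Step 3 (Bounding the kernel).} For $\alpha \in \ker(\Psi)$, choose $\gamma \in H^i(K, \mu_2)$ with $r_2^i(\gamma) = (r_{4,v}^i(\tilde\beta))_v$; then $\delta := \tilde\beta - \gamma \in H^i(K, \mu_4^{\otimes i-1})_V$, and $2\delta = 2\tilde\beta \in [Q] \cup H^{i-2}(K, \mu_2)$ since $\gamma$ is $2$-torsion in $H^i(K, \mu_4^{\otimes i-1})$. Sending $\alpha$ to the class of $\delta$ in $H^i(K, \mu_4^{\otimes i-1})_V$, modulo the ambiguities coming from the choices of $\gamma$ (which lie in $H^i(K, \mu_2)_V$) and of $\tilde\beta$ (which lie in $[Q]\cup H^{i-2}(K, \mu_2)$), defines a map $\Xi:\ker(\Psi) \to H^i(K, \mu_4^{\otimes i-1})_V/H^i(K, \mu_2)_V$. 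Using Arason's theorem that $\ker(\mathrm{Res}_{L/K}: H^i(K, \mu_2) \to H^i(L, \mu_2)) = [Q]\cup H^{i-2}(K, \mu_2)$ (a consequence of the injectivity of $\jmath$), together with a second application of Proposition~\ref{prop:berhuy}(ii) at the residue level, one identifies $\ker(\Xi)$ with a subquotient of $H^i(K, \mu_2)_V$ of order at most $|H^i(K, \mu_2)_V|$, so that
\[
|\ker(\Psi)| \leq |\mathrm{image}(\Xi)| \cdot |\ker(\Xi)| \leq \frac{|H^i(K, \mu_4^{\otimes i-1})_V|}{|H^i(K, \mu_2)_V|} \cdot |H^i(K, \mu_2)_V| = |H^i(K, \mu_4^{\otimes i-1})_V|.
\]
Combining with Step 2 gives the claimed bound.

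\emph{Main obstacle.} The most delicate point is the careful bookkeeping in Step~3: verifying precisely that the $\delta$-invariant is well-defined modulo $H^i(K, \mu_2)_V$ under the combined ambiguities of $\gamma$ and $\tilde\beta$, and establishing the bound $|\ker(\Xi)| \leq |H^i(K, \mu_2)_V|$. The latter reduces to analyzing lifts $\tilde\beta$ that already lie in $H^i(K, \mu_2)$ and showing, via Arason's theorem modulo $[Q] \cup H^{i-2}(K, \mu_2)$ combined with the Berhuy-type residue constraint at $K^{(v)}(Q^{(v)})$, that such classes are controlled by genuinely unramified elements on $K$.
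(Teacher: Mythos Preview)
Your approach is the paper's approach: both transfer the problem to $K$ via Berhuy's isomorphism $\jmath$, use $e_{\tilde v/v}=1$ to make residues commute with restriction, and invoke Proposition~\ref{prop:berhuy}(ii) at the residue fields $K^{(v)}$ to get injectivity on the right. The paper compresses all of this into one commutative square (with $S\hookrightarrow H^i(K,\mu_4^{\otimes i-1})/B$ on top, $H^i_{nr}(L,\mu_2)$ on the bottom, and the induced quotient map $r_4^i$ along the top) and reads off $|\ker R_2^i|\le|\ker r_4^i|$ directly; your maps $\Psi$ and $\Xi$ are an explicit unwinding of that same diagram chase.

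The obstacle you flag in Step~3 is genuine, and your sketch does not resolve it. Chasing your definitions, $\ker\Xi$ consists exactly of those $\alpha$ whose class $\jmath^{-1}(\alpha)$ admits a lift $\tilde\beta\in H^i(K,\mu_2)$; via $\jmath$ this identifies $\ker\Xi$ with $(r_2^i)^{-1}(D)/B$, where $D=\bigoplus_v[Q^{(v)}]\cup H^{i-3}(K^{(v)},\mu_2)$ and $B=[Q]\cup H^{i-2}(K,\mu_2)$. One has a short exact sequence
\[
0\;\longrightarrow\; \frac{H^i(K,\mu_2)_V}{H^i(K,\mu_2)_V\cap B}\;\longrightarrow\; \frac{(r_2^i)^{-1}(D)}{B}\;\xrightarrow{\;r_2^i\;}\; \frac{D\cap\mathrm{im}(r_2^i)}{r_2^i(B)}\;\longrightarrow\;0,
\]
and nothing in the hypotheses bounds the right-hand quotient by $|H^i(K,\mu_2)_V\cap B|$: it measures tuples $([Q^{(v)}]\cup\eta_v)_v$ hit by $r_2^i$ globally but not by $r_2^i$ restricted to $B$. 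Your phrase ``controlled by genuinely unramified elements on $K$'' does not touch this term. The paper is equally terse at precisely this point (it writes ``it is easy to see'' and asserts the product formula for $|\ker r_4^i|$), so the difficulty is intrinsic to the stated constant rather than to your outline; finiteness of $H^i_{nr}(L,\mu_2)_{\tilde V}$ survives unscathed, since all groups in the sequence above are finite under the hypotheses, but the clean product bound requires either an additional argument controlling $(D\cap\mathrm{im}\,r_2^i)/r_2^i(B)$ or a relaxation of the inequality.
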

\begin{proof}
For each $v \in V$, the following diagram  with arrows representing the natural restriction maps commutes by the functoriality.

\begin{equation*}
\begin{tikzpicture}[thick]
  \node (A) {$H^i(K, \mu_4^{\otimes{i-1}})$};
  \node (B) [right of=A] {$H^i(K_v, \mu_4^{\otimes{i-1}})$};
  \node (C) [below of=A] {$H^i(L, \mu_4^{\otimes{i-1}})$};
  \node (D) [below of=B] {$H^i(L_{\tilde{v}}, \mu_4^{\otimes{i-1}})$};

  \draw[->] (A) to node {} (B);
  \draw[->] (A) to node [swap]  {}(C);
  \draw[->] (C) to node [swap]  {}(D);
  \draw[->] (B) to node {} (D);
 
\end{tikzpicture}
\end{equation*}

Moreover by  Chapter II, \S8,  Proposition 8.2 in \cite{garibaldi_coho_inv},  the  functoriality of restriction yields
\begin{equation*}
\begin{tikzpicture}[thick]
  \node (A) {$H^i(K_v, \mu_4^{\otimes{i-1}})$};
  \node (B) [right of=A] {$H^{i-1}(K^{(v)}, \mu_4^{\otimes{i-2}})$};
  \node (C) [below of=A] {$H^i(L_{\tilde{v}}, \mu_4^{\otimes{i-1}})$};
  \node (D) [below of=B] {$H^{i-1}(L^{(\tilde{v})}, \mu_4^{\otimes{i-2}})$};

  \draw[->] (A) to node {} (B);
  \draw[->] (A) to node [swap]  {}(C);
  \draw[->] (C) to node [swap]  {}(D);
  \draw[->] (B) to node {$e_{\tilde{v}/v} =1$} (D);
 
\end{tikzpicture}
\end{equation*}
where the horizontal arrows represent the residue maps. Combining the above commutative diagrams for each $v \in V$, together with Proposition \ref{prop:berhuy}, Lemma \ref{lem:residue} and  (\ref{eqn:injective}), we get that the following diagram commutes where $\imath$ is injective and $\jmath$ is an isomorphism.
\begin{equation*}
\begin{tikzpicture}[thick] 
  \node (A) {\large $\frac{H^i(K, \mu_4^{\otimes{i-1}})}{([Q] \cup H^{i-2}(K, \mu_2))}$};
  \node (B) [right of=A] {\large $\bigoplus_{v\in V} \frac{H^{i-1}(K^{(v)}, \mu_4^{\otimes{i-2}})}{([Q^{(v)}] \cup H^{i-3}(K^{(v)}, \mu_2))} $};
  \node (C) [below of=A] {$H^i_{nr}(L, \mu_2)$};
  \node (D) [below of=B] {$\bigoplus_{\tilde{v} \in V} H^{i-1}(L^{(\tilde{v})}, \mu_2)$};
\node (E) [left of = A] {$S$};
\draw[->] (E) to node {$\jmath$} (C);
  \draw[->] (A) to node {$r^i_4$} (B);
\draw[->] (E) to node {$\imath$} (A);
  \draw[->] (C) to node [swap]  {$R^i_2$}(D);
  \draw[->] (B) to node {$e_{\tilde{v}/v} =1$} (D);
 
\end{tikzpicture}
\end{equation*}
Now  $L_{\tilde{v}}$ is also the completion of $K_v(Q_v)$ at $\tilde{v}$.  So by Proposition \ref{prop:properties}(ii), for the extension $L_{\tilde{v}}/K_v$, we have ramification index $e_{\tilde{v}/v} =1$ and  the residue field $L^{(\tilde{v})}  \simeq K^{(v)}(Q^{(v)})$. Hence the right vertical map is  injective by Proposition \ref{prop:berhuy}.   \\
\indent By the hypothesis on $(K, V)$, it is easy to see that  $Ker ~r_4^i$  is finite and 
\begin{align*}
|H^i_{nr}(L, \mu_2)_{\tilde{V}}|  = |Ker ~R_2^i| \leq  |Ker ~r_4^i| = |H^i (K , \mu_2)^{V}|\cdot |H^i(K, \mu_4^{\otimes i-1})_{V}|
\end{align*}

\end{proof}

We are now ready to prove Theorem \ref{thm:main}.\\

\noindent \emph{Proof of Theorem \ref{thm:main}} : \\
Since the unramified $2$-torsion Brauer group with respect to $V$,  ${}_{2}Br(K)_V$ is isomorphic to $H^2(K, \mu_2)_V$   which is finite by hypothesis, there are only finitely many quaternion algebras unramified at all $v \in V$. So it suffices to show that for a fixed unramified $Q$ over $K$, the number of $K$-isomorphism classes of the universal covering of the special unitary groups $SU_n(h, Q)$ of  $n$-dimensional skew-hermitian forms $h$ over $Q$  that  have good reduction at all $v \in V$is  upper bounded by 
\begin{align*}
 |Pic(V)/2 Pic(V)| \cdot  \prod_{i=1}^{l} |H^i (K , \mu_2)^{V}|\cdot |H^i(K, \mu_4^{\otimes i-1})_{V}|
\end{align*}
  We have two cases.
\begin{enumerate} [(i)]
\item $Q$ is a  split quaternion.  In this case $SU(h,Q) \simeq SO_{2n}(q_h, K)$. Then by  the proof of Theorem 2.1 in \cite{rapinchuk_spinor},  we conclude that the number of $K$-isomorphism classes of $SO_{2n}(q_h, K)$ that have good reduction at all $v \in V$ is finite and bounded above by 
\begin{align*}
|Pic(V)/2Pic (V)| \cdot \prod_{i=1}^l|H^i(K, \mu_2)_V| \leq  |Pic(V)/2 Pic(V)| \cdot  \prod_{i=1}^{l} |H^i (K , \mu_2)^{V}|\cdot |H^i(K, \mu_4^{\otimes i-1})_{V}|
\end{align*}
The above inequality is due to (\ref{eqn:injective}).
\item $Q$  is  a quaternionic division algebra unramified at all $v \in V$.  The idea of the proof in this case is  to go  back and forth between $h$ and $q_{h_{K(Q)}}$ and using arguments similar to the one in \cite{rapinchuk_spinor}. \\

 \noindent\textbf{Notation:} In order to avoid notational complexity, we will be simplifying some notations as follows. 
\begin{itemize}
\item For a skew-hermitian form $h$ over a  quaternion algebra $Q$ with center a  field $F$, we will make a slight abuse of notation and write $q_h$ instead of $q_{h_{F(Q)}}$ for the quadratic form corresponding to $h_{F(Q)}$ obtained via Morita theory. 
\item For a field $F$ and a  class $[q] \in \mathcal{I}(F)^m $, we denote by $<q> \in H^m(F, \mu_2)$, its image under the natural  map
\begin{align*}
\mathcal{I}(F)^m \rightarrow \mathcal{I}(F)^m/\mathcal{I}(F)^{m+1} \xrightarrow{\simeq} H^m(F, \mu_2)
\end{align*} 
where the first map is the natural projection and the second one is the Milnor isomorphism  as mentioned in  (\ref{eqn:milnor}).
\end{itemize} 
As before let $L:=K(Q)$. \indent Let $\{h_i\}_{i \in I}$ denote a family of $n$-dimensional non-degenerate skew-hermitian forms over $Q$ such that
\begin{itemize}
\item for each $i \in I$, the universal covering of $G_i = SU_n(h_i, Q)$ has good reduction at all $v \in V$ and 
\item for $i,j \in I, i \neq j$, the forms $h_i$ and $h_j$ are not similar i.e., $h_i \ncong\lambda h_j, \lambda \in K^*$.
\end{itemize}
It suffices to show that
\begin{align*}
|I|  \leq \prod_{i=0}^l d_i
\end{align*}
where  $d_0 = Pic(V)/2Pic(V)$ and for $1 \leq i \leq l = [log_2~2n] +1$,
\begin{align*}
 d_i &=  |H^i (K , \mu_2)^{V}|\cdot |H^i(K, \mu_4^{\otimes i-1})_{V}|
\end{align*}

Note that by Remark \ref{rmk:equiv}, the above conditions imply that for each $i \in I$ and any $v \in V$, there exists $\lambda_v^{(i)} \in K_v^*$ such that the form $\lambda_v^{(i)} h_i$ over $Q_v$ is has good reduction at $v$.  Also  because of condition  (A) on $K$, we can assume that  $ \lambda^{(i)}_v =1$ for almost all $v \in V$.  Recall by Lemma 2.2 in \cite{rapinchuk_spinor} that there is a natural isomorphism 
\begin{align}\label{eqn:ideles}
Pic(V)/2Pic(V) \simeq \mathbb{I}(K,V)/\mathbb{I}(K,V)^2\mathbb{I}_0(K,V)K^* 
\end{align}
where 
\begin{align*}
\mathbb{I}(K,V) =\big \{  (x_v) \in \prod_{v \in V}K_v^* ~|~ x_v \in \mathcal{O}_v^* \text{~for almost all~} v \in V     \big\}
\end{align*}
 is the \emph{group of id\`eles}  and 
\begin{align*}
\mathbb{I}_0(K,V) = \prod_{v \in V} \mathcal{O}_v^*
\end{align*}
is the \emph{subgroup of integral id\`eles}.
\end{enumerate}
So $\lambda^{(i)} : = (\lambda_v^{(i)})_{v \in V} \in \mathbb{I}(K,V)$. Since $d_0$ is finite by hypothesis,  using (\ref{eqn:ideles}), we conclude that  there exists a subset $J_0 \subseteq I$ of size  $\geq I/d_0$ (if $I$ is infinite so is $J_0$) such that all $\lambda^{(i)}, i \in J_0$ have the same image in $\mathbb{I}(K,V)/\mathbb{I}(K,V)^2\mathbb{I}_0(K,V)K^* $. Fix $j_0 \in J_0$.  For any $j \in J_0$, we can write 
\begin{align*}
\lambda^{(j)} = \lambda^{(j_0)} ( \alpha^{(j)})^2 \beta^{(j)} \delta^{(j)}
\end{align*}
with  $\alpha^{(j)} \in \mathbb{I}(K,V)$, $\beta^{(j)} \in \mathbb{I}_0(K,V)$ and $\delta^{(j)} \in K^*$. Then set 
\begin{align*}
H_j &= \delta^{(j)} h_j \\
\Lambda_j &= (\delta^{(j)})^{-1} \lambda^{(j)} = \lambda^{(j_0)} (\alpha^{(j)})^2 \beta^{(j)}
\end{align*}
It is easy to see that for $j\neq j'$, $H_j \ncong H_{j'}$ and hence $q_{H_j} \ncong q_{H_{j'}}$ as  quadratic forms over $L$ (see \S \ref{subsec:general}). Moreover, $\Lambda_{v}^{(j)} H_j = \lambda_v^{(j)}h_j$ and hence $\Lambda_{v}^{(j)} H_j $ has good reduction at $v$. Therefore by Theorem \ref{thm:cool} 
\begin{align*}
q_{\Lambda_v^{(j)}H_j} \in W_0(L_{\tilde{v}}) , ~j \in J_0
\end{align*}
(See \S\ref{sec:unramified} for the definition of $W_0(L_{\tilde{v}})$).
Also  note that 
\begin{align*}
q(j , \tilde{v}) &:= \Lambda_v^{(j_0)} ( q_{H_j} \perp  -q_{H_{j_0}} )\\
 &=\Lambda_v^{(j_0)} \cdot (\Lambda_v^{(j)})^{-1} \cdot \Lambda_v^{(j)} q_{H_j}  \perp  -\Lambda_v^{(j_0)}q_{H_{j_0}} \\
&=  (\alpha^{(j)})^{-2} (\beta^{(j)})^{-1} q_{\Lambda_v^{(j)}H_j}  \perp -q_{\Lambda_v^{(j_0)}H_{j_0}} ~~~~~~~~~~~~~~~~\text{(by Remark \ref{rmk:similar})}\\
&\cong(\beta^{(j)})^{-1} q_{\Lambda_v^{(j)}H_j}  \perp -q_{\Lambda_v^{(j_0)}H_{j_0}} 
\end{align*}
As $(\beta^{(j)})^{-1} \in \mathcal{O}_v^*$, we see that for every $v \in V$ 
\begin{align*}
[q(j , \tilde{v})] &= \Lambda_v^{(j_0)} ( [q_{H_j}] \perp  -[q_{H_{j_0}}] ) \in W_0(L_{\tilde{v}}) \cap \mathcal{I}(L_v)
\end{align*}
Now by Lemma 3.3 in \cite{rapinchuk_spinor} and Proposition \ref{prop:berhuy}, we get 
\begin{align*}
<q_{H_j}> - <q_{H_{j_0}}>  \in H^1_{nr}(L, \mu_2)_{\tilde{V}} \leq d_1
\end{align*}
The last inequality follows from Theorem \ref{thm:finite}. Therefore we can find a subset $J_1 \subseteq J_0$ of size $\geq |J_0|/d_1 \geq |I|/d_0d_1$ such that for $j  \in J_1$, the classes $[q_{H_j}] - [q_{H_{j_0}}] \in \mathcal{I}(L)$ all have the same image in $H^1(L, \mu_2)$. Fix $j_1 \in J_1$.  Then for any $j \in J_1$, we have 
\begin{align*}
<q_{H_j}> - <q_{H_{j_1}}> = (<q_{H_j}> - <q_{H_{j_0}}>) - (<q_{H_{j_1}}> - <q_{H_{j_0}}>) = 0 \in H^1(L, \mu_2)
\end{align*}
Hence  $[q_{H_j}] - [q_{H_{j_1}}]  \in \mathcal{I}(L)^2$. Moreover
\begin{align*}
\Lambda^{(j)} = \Lambda^{(j_1)}(\overline{\alpha}^{(j)})^2 (\overline{\beta}^{(j)}) ,  ~~~~~\overline{\alpha}^{(j)} \in \mathbb{I}(K,V), \overline{\beta}^{(j)} \in \mathbb{I}_0(K,V) 
\end{align*}
Then as before we  conclude that  for every $\tilde{v} \in\tilde{ V}$
\begin{align*}
\Lambda_v^{(j_1)} ( [q_{H_j}] \perp  -[q_{H_{j_1}}] ) \in W_0(L_{\tilde{v}}) \cap \mathcal{I}(L_{\tilde{v}})^2
\end{align*}
Again by using Lemma 3.3 in \cite{rapinchuk_spinor}, Proposition \ref{prop:berhuy} and Theorem \ref{thm:finite} as before,  we get
\begin{align*}
<q_{H_j}> - <q_{H_{j_1}}>  \in H^2_{nr}(L, \mu_2)_{\tilde{V}} \leq d_2
\end{align*}
So there exists a subset $J_2 \subseteq J_1$ of size  $\geq |I|/d_1d_1d_2$ such that that for each $j \in J_2$, the classes $[q_{H_j}] - [q_{H_{j_1}}]$ have the same image in $H^2(L, \mu_2)$. Fixing $j_2 \in J_2$, we have 
\begin{align*}
[q_{H_j}] - [q_{H_{j_2}}]  \in \mathcal{I}(L)^3 ~~~~j \in J_2
\end{align*}

Proceeding inductively  we get a nested chain  of subsets 
\begin{align*}
I \supseteq J_0 \supseteq J_1 \supseteq J_2 \supseteq \cdots \supseteq J_l
\end{align*}
such that for any $m =1, 2, \cdots l$, 
\begin{itemize}
\item $|J_m| \geq |I|/d_0d_1\cdots d_m$ and 
\item for $j \in J_m$, we have 
\begin{align*}
[q_{H_j}] - [q_{H_{j_m}}]  \in \mathcal{I}(L)^{m+1} ~~~~j \in J_m
\end{align*}
\end{itemize}
But by a theorem of Arason and Pfister (\cite{arason_pfister}, also see \cite{lam_quadratic}, Chapter X , Hauptsatz 5.1), the dimension of any positive dimensional anisotropic form in $\mathcal{I}(K)^{l+1}$ is $ \geq 2^{l+1} >2 ^{log_2 2n +1} =4n$. Thus 
$[q_{H_j}] - [q_{H_{j_l}}]  \in \mathcal{I}(L)^{l+1}$ implies that $q_{H_j} \cong q_{H_{j_l}}$, $\forall j \in J_l$.  But as seen before the forms $q_{H_j} $are pairwise inequivalent . Hence we conclude that $|J_l | =1$ and 
\begin{align*}
|I| \leq  |Pic(V)/2 Pic(V)| \cdot  \prod_{i=1}^{l} |H^i (K , \mu_2)^{V}|\cdot |H^i(K, \mu_4^{\otimes i-1})_{V}|
\end{align*}

\section*{acknowledgements}
 The author would like to thank  Andrei Rapinchuk for the many useful discussions with him  while he was at the Institute for Advanced Study, which inspired the research presented in this paper. She  would also like to thank Daniel Krashen for the fruitful conversations on this topic and for his feedback on this work. She is very grateful to Pierre Deligne for suggesting some corrections and making insightful remarks on the results which not only enhanced the quality of this manuscript but also helped her  understand math better. Finally she thanks the anonymous referee for giving  useful suggestions that improved the exposition of this manuscript.  This material is based upon work supported by the National Science Foundation under Grant No. DMS - 1638352.
\nocite*{}
\bibliographystyle{alpha}
\bibliography{ref}

\end{document}